\def\N{\mathbb{N}}
\theoremstyle{plain}
\newtheorem{thm}{Theorem}
\newtheorem{cor}[thm]{Corollary}
\newtheorem{lem}[thm]{Lemma}
\newtheorem{prop}[thm]{Proposition}
\newtheorem{con}[thm]{Conjecture}
\theoremstyle{remark}
\newtheorem{rem}{Remark}
\theoremstyle{definition}
\begin{document}

\date{June 2014}

\title{Non-periodic continued fractions for quadratic irrationalities}

\author{Michael O. Oyengo}
\address{Department of Mathematics \\
University of Illinois\\
1409 W. Green Street \\
Urbana, IL 61801}
\email{mchlyng2@illinois.edu}



\begin{abstract}
A well known theorem of Lagrange states that the simple continued fraction of a real number $\alpha$ is periodic if and only if $\alpha$ is a quadratic irrational. We examine non-periodic and non-simple continued fractions formed by two interlacing geometric series and show that in certain cases they converge to quadratic irrationalities. This phenomenon is connected with certain sequences of polynomials whose properties we examine further.
\end{abstract}

\maketitle

Key words: continued fractions; non-periodic; quadratic irrationals.\\

\text{Mathematics Subject Classification 2010: 11A55, 11B83, 11C08}

\newcommand\sfrac[2]{{#1/#2}}

\newcommand\cont{\operatorname{cont}}
\newcommand\diff{\operatorname{diff}}


\section{Introduction}
In 1770 Lagrange proved that any quadratic irrational has a continued fraction expansion which is periodic after a certain stage and the converse was also proved, see \cite{Khinchin} or \cite{Olds}. Precisely:
\begin{thm}(\cite{Khinchin}, p. 48)
Every periodic continued fraction represents a quadratic irrational number and every quadratic irrational number is represented by a periodic continued fraction.
\end{thm}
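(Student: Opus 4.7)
The plan is to prove the two directions separately, the converse being substantially harder. For the forward implication, suppose $\alpha = [a_0; a_1, \ldots, a_{m-1}, \overline{b_0, b_1, \ldots, b_{k-1}}]$ is eventually periodic. I would first reduce to the purely periodic case by noting that if $\beta = [\overline{b_0, \ldots, b_{k-1}}]$ is a quadratic irrational, then $\alpha$ is the image of $\beta$ under a Möbius transformation with integer coefficients (arising from the convergents of the preperiodic part), so $\alpha$ lies in $\Q(\beta)$ and is itself a quadratic irrational. For $\beta$ purely periodic, the identity $\beta = [b_0, b_1, \ldots, b_{k-1}, \beta]$ combined with the standard convergent formula $[b_0, \ldots, b_{k-1}, x] = (p_{k-1} x + p_{k-2})/(q_{k-1} x + q_{k-2})$ yields the integer quadratic $q_{k-1}\beta^2 + (q_{k-2} - p_{k-1})\beta - p_{k-2} = 0$, whose discriminant is a positive non-square because $\beta$ is irrational.

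For the converse, let $\alpha$ satisfy $A\alpha^2 + B\alpha + C = 0$ with $A, B, C \in \Z$, $A > 0$, and $D := B^2 - 4AC$ a positive non-square. Let $\alpha_n$ denote the $n$-th complete quotient of $\alpha$, so $\alpha = (p_{n-1}\alpha_n + p_{n-2})/(q_{n-1}\alpha_n + q_{n-2})$. Substituting into the defining quadratic and clearing denominators produces $A_n \alpha_n^2 + B_n \alpha_n + C_n = 0$ with integer coefficients; a direct computation gives $A_n = A p_{n-1}^2 + B p_{n-1} q_{n-1} + C q_{n-1}^2$, the relation $C_n = A_{n-1}$, and an analogous formula for $B_n$. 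Since the substitution matrix has determinant $\pm 1$, the discriminant is preserved: $B_n^2 - 4 A_n C_n = D$.

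The heart of the argument, and the main obstacle, is showing that only finitely many triples $(A_n, B_n, C_n)$ can arise. For this I would exploit the best-approximation estimate $|\alpha q_{n-1} - p_{n-1}| < 1/q_n$ to write $p_{n-1} = \alpha q_{n-1} + \varepsilon_{n-1}/q_{n-1}$ with $|\varepsilon_{n-1}| < 1$, and plug into the formula for $A_n$. The leading contribution $q_{n-1}^2(A\alpha^2 + B\alpha + C)$ vanishes, leaving an expression of size bounded by a constant of the form $2|A\alpha| + |A| + |B|$, independent of $n$. The relation $C_n = A_{n-1}$ transfers the same bound to $|C_n|$, and the conserved discriminant forces $|B_n| \le \sqrt{D + 4|A_n C_n|}$ to be bounded as well. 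Hence $(A_n, B_n, C_n)$ takes only finitely many values, so by the pigeonhole principle there exist $m < n$ with $(A_m, B_m, C_m) = (A_n, B_n, C_n)$; restricting to sufficiently large indices, where $\alpha_n > 1$ is the unique root of its quadratic exceeding $1$, this yields $\alpha_m = \alpha_n$ and hence the periodicity of the continued fraction from some point onward.
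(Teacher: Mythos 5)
This is a statement the paper does not prove at all: it is quoted verbatim from Khinchin (p.\ 48) as classical background, so there is no in-paper argument to compare against. Your sketch is the standard Euler--Lagrange proof, essentially the one in the cited source, and it is sound in outline: the forward direction via the M\"obius reduction to the purely periodic case and the quadratic $q_{k-1}\beta^2+(q_{k-2}-p_{k-1})\beta-p_{k-2}=0$ is complete, and the converse via the invariant discriminant and the boundedness of $A_n$ (hence of $C_n=A_{n-1}$ and then of $B_n$) is the right skeleton. The one step you gloss over is the very last one: after the pigeonhole gives $(A_m,B_m,C_m)=(A_n,B_n,C_n)$, concluding $\alpha_m=\alpha_n$ from ``$\alpha_n$ is the unique root of its quadratic exceeding $1$'' requires showing that the conjugate root $\overline{\alpha_n}$ is eventually $<1$; this follows from $\overline{\alpha_n}=-\dfrac{q_{n-2}}{q_{n-1}}\cdot\dfrac{\overline{\alpha}-p_{n-2}/q_{n-2}}{\overline{\alpha}-p_{n-1}/q_{n-1}}\to$ a quantity in $(-1,0)$ since $p_n/q_n\to\alpha\neq\overline{\alpha}$, but it is a genuine lemma, not a triviality. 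Khinchin sidesteps it by taking a triple value attained at \emph{three} indices, so that two of the three complete quotients must coincide because a quadratic has only two roots; either repair closes the argument.
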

The above mentioned continued fractions are all simple. Now introduce the continued fraction
\begin{equation*}
  F(x,y)=[x,y^{-1},x^{2},y^{-2},x^{3},y^{-3},x^{4},y^{-4},\dots]
\end{equation*}
where $x$ and $y$ are integers. Despite its simple form, this two variable continued fraction does not seem to appear in  the literature.  Some preliminary computer experiments suggest;

\begin{con}
If $x$ and $y$ are positive integers, and $x\neq y$, then $F(x,y)$ is transcendental.
\end{con}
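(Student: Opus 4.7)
My plan is to apply the Thue--Siegel--Roth theorem: if $F(x,y)$ is algebraic and irrational, then for every $\varepsilon>0$ only finitely many rationals $p/q$ satisfy $|F(x,y)-p/q|<q^{-2-\varepsilon}$. The goal is to exhibit an infinite subsequence of convergents violating this bound, forcing $F(x,y)$ to be transcendental. Irrationality of $F(x,y)$ either falls out of the same estimate or can be checked directly: when $y=1$ the expansion is already a simple continued fraction which is manifestly non-periodic, so by Lagrange's theorem (Theorem~1) it cannot be a quadratic irrational, and showing it is not rational is elementary; when $y\ge 2$, irrationality must be verified by an independent argument on the generalized expansion.

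First I would analyze the convergents by substituting $a_{2k}=x^{k+1}$ and $a_{2k+1}=y^{-(k+1)}$ into the recurrences $p_n=a_np_{n-1}+p_{n-2}$, $q_n=a_nq_{n-1}+q_{n-2}$ and clearing powers of $y$ to obtain polynomial sequences $P_n(x,y),Q_n(x,y)\in\Z[x,y]$ -- presumably the polynomial sequences whose properties the abstract promises to examine further. The task is to extract their $x$- and $y$-degrees, leading terms, and -- most importantly -- the gcd $D_n(x,y)=\gcd(P_n,Q_n)$, since after specializing $x,y$ to integers only the reduced denominator $Q_n/D_n$ is seen by Roth's theorem.

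Next, using the standard identity
\begin{equation*}
F(x,y)-\frac{p_n}{q_n}=\frac{(-1)^n}{q_n(\alpha_{n+1}q_n+q_{n-1})},\qquad \alpha_{n+1}=[a_{n+1};a_{n+2},\ldots],
\end{equation*}
I would bound the approximation error. The useful subsequence is $n=2k-1$, where $a_{n+1}=x^{k+1}$ is large and hence $\alpha_{n+1}$ is large, yielding small error. Combining this with the reduced denominators from the previous step, the aim is to prove $|F(x,y)-p_n/q_n|<q_n^{-2-\varepsilon}$ for infinitely many $n$.

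The main obstacle is the quantitative step. A naive degree count gives $\deg_x Q_n\sim k^2$ while $\log_x a_{n+1}\sim k$, so without systematic cancellation between $P_n$ and $Q_n$ Roth's bound is just barely missed. The transcendence proof therefore hinges on identifying large common factors $D_n$ -- exactly the structural feature of the polynomial sequences emphasized in the abstract. If this cancellation turns out to be insufficient, my fallback is Mahler's method: find a two-variable function $f(z,w)$ satisfying a functional equation of the form $f(z,w)=R(z,w,f(z^2,w^2))$ with $F(x,y)=f(x,y^{-1})$, and then apply Mahler's transcendence theorem after verifying non-degeneracy at $(x,y^{-1})$. The clean interlacing of the two geometric progressions $\{x^k\}$ and $\{y^{-k}\}$ makes the existence of such a functional equation plausible, and if successful it would sidestep the delicate gcd analysis altogether.
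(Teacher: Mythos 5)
The statement you are trying to prove is labelled a \emph{Conjecture} in the paper, supported only by ``preliminary computer experiments''; the paper contains no proof of it, so there is nothing to compare your approach against. What the paper actually proves is the complementary case $x=y$ (and the variant $F(x,s)$), where the limit is a quadratic irrational --- so any purported proof of transcendence must genuinely use $x\neq y$, and yours does not yet isolate where that hypothesis enters.

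As a proof, your proposal has a gap at exactly the step you flag yourself: the Roth-type inequality $|F(x,y)-p_n/q_n|<q_n^{-2-\varepsilon}$ is never established, and your own degree count ($\deg_x Q_n\sim k^2$ against $\log_x a_{n+1}\sim k$) indicates it \emph{fails} unless the reduced denominators are dramatically smaller than the naive ones. You posit ``large common factors $D_n$'' but give no mechanism producing them, and nothing in the paper's polynomial identities (e.g.\ $A_{2j-2}A_{2j+2}-A_{2j}^2=-x^{j+2}$, which shows consecutive numerators share at most a power of $x$) suggests such cancellation. Several prerequisites are also left open: convergence of $F(x,y)$ for $x\neq y$ is not proved anywhere (the paper's convergence arguments rely on the constant-coefficient linear recurrence available only when the two geometric progressions are tied together, as in $F(x,s)$); irrationality for $y\geq 2$ is deferred to ``an independent argument''; and the error identity you quote needs renormalization once the partial quotients $y^{-k}$ are cleared of denominators, which changes the numerator from $(-1)^n$ to a power of $y$ and further weakens the approximation. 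The Mahler-method fallback is likewise only a hope --- no functional equation is exhibited. In short, this is a reasonable research plan for attacking an open conjecture, but every load-bearing step remains to be done.
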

However the case where $x=y$ is different. Section \ref{conv} makes an in-depth study of $F(x,x)$. In particular we prove that for $x$ a positive integer, $F(x,x)$ is the largest root of $P(x,z)=z^{2}-(2x-1)z-x$.

The original motivation for this work came from the study of simple continued fractions with many large partial quotients. A  classical example is the continued fraction expansion for $\exp(\frac{1}{x})$ given by Euler in the 1730's namely,
\begin{equation}\label{eq1}
  \sum_{k=0}^{\infty}\frac{x^{-k}}{k!}=[1, x-1, 1, 1, 3x-1, 1, 1, 5x-1, 1, ..., 1, (2n+1)x-1, 1, ...]
\end{equation}
for $x$ a large integer, there will be infinitely many large partial quotients, these are partial quotients that are linear in $x$.

The phenomenon of large partial quotients also occurs for polynomial functions of $x$. For example, one may simply truncate the series of (\ref{eq1}). For another example, consider the polynomial $$P(a,x)=3a(x+1) +x^2+3x+5$$ where $a$ is a very large positive integer. If we set $a=10^6$,  it has a root $x_{0}$ near $x=-1$ and a root $x_{1}$ near $x=-3000000$. The continued fraction of $-x_{0}$ begins

\begin{equation}\label{eq3}
  [1,10^6,3,\frac{10^6-1}{3},9,\frac{10^6-1}{9},27,\frac{10^6-1}{27},81,12345,1,2,26,1,2,4114,1,8,\dots]
\end{equation}
The other root of the quadratic has a similar pattern and is equal to $-3000000-r$ where the continued fraction of $r$ begins
\begin{equation}\label{eq3b}
  [1,1,10^6-1,3,\frac{10^6-1}{3},9,\frac{10^6-1}{9},27,\frac{10^6-1}{27},81,12345,1,2,26,1,2,4114,1,8,\dots]
\end{equation}
By Lagrange's theorem these will eventually be periodic, with perhaps a very long period. However the remarkable initial pattern suggests the study of $F(x,y)$, at least for $x=y$. We use the language of van der Poorten and Shallit \cite{Shallit1}, and call a partial quotient (other than `the first' which may be zero) `inadmissible' if it is zero, negative or a fraction. Hence all partial quotients of a regular continued fraction expansion of a real number are admissible. From the initial pattern in (\ref{eq3}) we are led to an elegant continued fraction,
\begin{equation}\label{eq4}
  [1,10^6,3,\frac{10^6-1}{3},3^2,\frac{10^6-1}{3^2},3^3,\frac{10^6-1}{3^3},3^4,\frac{10^6-1}{3^4},\dots,3^k,\frac{10^6-1}{3^k},\dots]
\end{equation}
which has partial quotients that are inadmissible from some point on. From computations in Mathematica it seems to converge to the same real number as (\ref{eq3}).  Consider more generally, for nonzero positive integers $x$ and $s$, the continued fraction
\begin{equation}\label{eq5a}
  F_{k}(x,s):=[x,\frac{s}{x},x^2,\frac{s}{x^2},x^3,\frac{s}{x^3},x^4,\frac{s}{x^4},\dots,x^k,\frac{s}{x^k}].
\end{equation}
We have;
\begin{thm}
The continued fraction $F_{k}(x,s)$ converges to the largest root of the polynomial $P(x,z,s)=sz^2-((s+1)x-1)z-x$.
\end{thm}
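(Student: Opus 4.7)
The plan is to recast the finite continued fraction as an orbit of a single Möbius transformation and identify the limit with its attracting fixed point. To begin, introduce the tails
$$G_j^{(k)} := [x^j,\, s/x^j,\, x^{j+1},\, s/x^{j+1},\, \ldots,\, x^k,\, s/x^k], \qquad 1 \le j \le k,$$
so that $F_k(x,s) = G_1^{(k)}$, the recurrence $G_j^{(k)} = x^j + 1/\bigl(s/x^j + 1/G_{j+1}^{(k)}\bigr)$ holds for $j<k$, and $G_k^{(k)} = x^k(s+1)/s$. The key simplification is the substitution $G_j^{(k)} = x^j\, v_j^{(k)}$, which eliminates the $j$-dependence and yields
$$v_j^{(k)} \;=\; \frac{x(s+1)\, v_{j+1}^{(k)} + 1}{sx\, v_{j+1}^{(k)} + 1} \;=:\; M\bigl(v_{j+1}^{(k)}\bigr),$$
with terminal value $v_k^{(k)} = (s+1)/s$. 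Hence $F_k(x,s) = x\,M^{k-1}\!\bigl((s+1)/s\bigr)$, and the problem is reduced to the iteration of one Möbius map.

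I would then identify the fixed points of $M$. They satisfy $sxv^2 - ((s+1)x - 1)v - 1 = 0$, and the substitution $z = xv$ turns this into $P(x,z,s)=0$. Writing the fixed points as $v_\pm$ with $v_+ > v_-$, Vieta gives $v_+ v_- = -1/(sx) < 0$ and $v_+ + v_- = ((s+1)x - 1)/(sx) > 0$, so $v_+ > 0 > v_-$; in particular the initial value $(s+1)/s$ lies strictly on the positive side, well away from $v_-$.

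It remains to verify that $v_+$ is the attracting fixed point and that the orbit actually converges to it. Differentiating gives $M'(v) = x/(sxv+1)^2$, and setting $u := sxv_+$ together with the defining quadratic $u^2 = ((s+1)x - 1)u + sx$ yields $(u+1)^2 = ((s+1)x + 1)\,u + sx + 1 > x$ (using $u>0$, $s \ge 1$, $x \ge 1$). Thus $0 < M'(v_+) < 1$. Standard Möbius dynamics, via the normal form $(M(v)-v_+)/(M(v)-v_-) = \lambda\,(v-v_+)/(v-v_-)$ with $|\lambda|<1$, then give $M^n(v_0) \to v_+$ for every $v_0 \ne v_-$; applying this with $v_0 = (s+1)/s$ concludes $F_k(x,s) \to x v_+$, which is the larger root of $P(x,z,s)$.

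The main anticipated obstacle is spotting the normalization $G_j/x^j$ that collapses the $j$-dependent recurrence into one Möbius map; once that is done, identifying the fixed points with the roots of $P$ is a single substitution, and the attractivity of $v_+$ reduces to the elementary inequality $sx + 1 > x$.
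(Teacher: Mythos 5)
Your argument is correct, and it takes a genuinely different route from the paper. The paper first replaces $F(x,s)$ by an equivalent continued fraction with constant partial numerators and denominators, extracts the four-term linear recurrences $A_{j}= ((s+1)x+1)A_{j-2}-xA_{j-4}$ (and likewise for $B_j$) satisfied by the numerators and denominators of the convergents, and then either solves these by characteristic roots or establishes the quadratic-form identities $A_{2j-1}^{2}-(2x-1)A_{2j-1}B_{2j-1}-xB_{2j-1}^{2}=x^{j}s$, from which $P(x,F_k(x,s),s)=x^{k}s/B_{2k-1}^{2}\to 0$. You instead evaluate the finite fraction bottom-up through its tails and observe that the normalization $G_j^{(k)}=x^j v_j^{(k)}$ turns the $j$-dependent tail recursion into iteration of the single M\"obius map $M(v)=\bigl((s+1)xv+1\bigr)/(sxv+1)$ from the fixed seed $(s+1)/s$; the limit is then the attracting fixed point, whose defining equation is exactly $P(x,xv,s)=0$ after the substitution $z=xv$. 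I checked the details: the tail recursion, the terminal value, the derivative computation $M'(v)=x/(sxv+1)^2$, and the inequality $(u+1)^2=((s+1)x+1)u+sx+1>x$ are all right, and since every $v_j^{(k)}>0$ the orbit never meets the pole of $M$ or the repelling fixed point $v_-<0$. Your approach buys a short, uniform treatment of general $s$ with an explicit geometric contraction rate $\lambda=M'(v_+)$, and it makes transparent \emph{why} the limit satisfies $P=0$. What it does not give is the exact evaluation $P(x,F_{k}(x,s),s)=x^{k}s/B_{2k-1}^{2}(x,s)$ at each convergent, which the paper singles out as being of special interest, nor the polynomial sequences $A_j,B_j$ and their identities that the paper studies in their own right. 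One cosmetic remark: your proof covers the truncations ending in $s/x^k$, which is exactly what the statement asserts; the intermediate truncations $F_k^{*}$ ending in $x^k$ are handled by the identical argument with terminal value $v=1$ in place of $(s+1)/s$.
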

\noindent The proof of this theorem for $s=1$ is the subject of section \ref{conv}. Of special interest is the rate at which $P(x,F_{k}(x,s),s)$ approaches zero as $k\to\infty$; see equations (\ref{conveq1}) and (\ref{conveq2}) that occur in the proof of theorem \ref{thm15}.  Additional results on  (\ref{eq5a}) are presented in section \ref{gen}. We shall however begin with the case $s=1$,
\begin{equation}\label{eq5}
  F(x):=[x,\frac{1}{x},x^2,\frac{1}{x^2},x^3,\frac{1}{x^3},x^4,\frac{1}{x^4},\dots,x^k,\frac{1}{x^k},\dots]
\end{equation}
In section 2, we express the convergents of this continued fraction in terms of polynomials $A_{n}(x)$ and $B_{n}(x)$. Their properties are developed further in section 3.

Although $P(1,z,1)=0$ is the equation of the golden ratio, and (as we indicate later) one of the convergence proofs in section 3 is analogous to a result involving Fibonacci polynomials, we stress that these polynomials are of a different nature than Fibonacci and Chebyshev polynomials.

Section \ref{gen} gives a generalization $F(x,s)$ of of the continued fraction $F(x)$ and its properties. We conclude this section by drawing a connection between the continued fraction $F(1,y^{-1})$ and some $q-$series studied by Auluck \cite{Auluck}, and a Ramanujan $q-$series.

\section{Convergents and generating functions}\label{gen}
This section investigates the convergents of (\ref{eq5}), which gives two sequences $(A_j(x))$ and $(B_j(x))$ of polynomials with positive integer coefficients. The recurrence relations and generating functions of these sequences are studied. It is rather difficult to prove results on (\ref{eq5}) in its given form, but it becomes easier when we transform it into an `equivalent' continued fraction of the form;
\begin{equation}\label{eq6}
b_0+\frac{a_1}{b_1}\underset{+}{}\frac{a_2}{b_2}\underset{+}{}\frac{a_3}{b_3}\underset{+}{}\dots\underset{+}{}\frac{a_j}{b_j}\underset{+}{}\dots
\end{equation}
We use the familiar notation
\begin{equation*}\label{ }
   K\left(\frac{a_{j}}{b_{j}}\right)=K_{j=1}^{\infty}\left(\frac{a_{j}}{b_{j}}\right):=
   \frac{a_1}{b_1}\underset{+}{}\frac{a_2}{b_2}\underset{+}{}\frac{a_3}{b_3}\underset{+}{}\dots\underset{+}{}\frac{a_j}{b_j}\underset{+}{}\dots
\end{equation*}
Two continued fractions  are said to be \textbf{equivalent} i.e. $ K(\frac{a_{j}}{b_{j}})\sim  K(\frac{c_{j}}{d_{j}})$, if they have the same sequence of classical approximants (or convergents) (\cite{Lisa}, pp 77). It is straightforward to verify  that $\sim$ is indeed an equivalence relation.
\begin{thm}\label{thm1}{(\cite{Lisa}, pp 77)}
$K\left(\frac{a_{j}}{b_{j}}\right)\sim  K\left(\frac{c_{j}}{d_{j}}\right)$ if and only if there exists a sequence $\{r_{j}\}$ of complex numbers with $r_{0}=1$ and $r_{j}\neq0$ for all $j\in\N$, such that, $$c_{j}=r_{j-1}r_{j}a_{j},\;\;\;d_{j}=r_{j}b_{j}\;\;\;for\;\;all\;\;j\in\N$$
\end{thm}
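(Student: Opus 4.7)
The plan is to work with the canonical numerators and denominators $A_n, B_n$ of $K(a_j/b_j)$, defined by the standard three-term recurrences $A_n = b_n A_{n-1} + a_n A_{n-2}$ and $B_n = b_n B_{n-1} + a_n B_{n-2}$ with initial values $A_{-1}=1$, $A_0=0$, $B_{-1}=0$, $B_0=1$, and to write $C_n, D_n$ for the analogous quantities attached to $K(c_j/d_j)$. Since the classical approximants are $A_n/B_n$ and $C_n/D_n$, the equivalence relation $\sim$ amounts to the equalities $C_n/D_n = A_n/B_n$ for every $n\ge 0$.

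For the sufficiency direction, given a sequence $\{r_j\}$ with $r_0=1$, $r_j\ne 0$, $c_j=r_{j-1}r_j a_j$ and $d_j=r_j b_j$, I would set $\rho_n=\prod_{i=0}^{n} r_i$ and prove by induction on $n$ that $C_n=\rho_n A_n$ and $D_n=\rho_n B_n$. The base case $n=0$ is immediate from $\rho_0=1$ together with the matching initial conditions. The inductive step is a direct computation: the substitutions for $c_n$ and $d_n$ are tailored so that $d_n\rho_{n-1}=\rho_n b_n$ and $c_n\rho_{n-2}=\rho_n a_n$, and plugging these into the $C$- and $D$-recurrences collapses them onto $\rho_n$ times the $A$- and $B$-recurrences. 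Dividing then gives $C_n/D_n=A_n/B_n$.

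For the necessity direction, I would reconstruct $\{r_j\}$ from the given equality of convergents. The determinant identity $A_{n-1}B_n-A_nB_{n-1}=(-1)^n a_1 a_2\cdots a_n$ (and its counterpart for $C,D$) ensures that the pairs $(A_n,B_n)$ and $(A_{n-1},B_{n-1})$ are linearly independent as long as each $a_j\ne 0$, which is the standing nondegeneracy assumption for the continued fractions involved. Consequently there is a unique expansion $(C_n,D_n)=\rho_n(A_n,B_n)+\mu_n(A_{n-1},B_{n-1})$, and the equality $C_n/D_n=A_n/B_n$ forces $\mu_n=0$, giving a nonzero scalar $\rho_n$. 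The initial conditions yield $\rho_0=1$. Setting $r_0=1$ and $r_n=\rho_n/\rho_{n-1}$ for $n\ge 1$, inserting $C_n=\rho_n A_n$ and $D_n=\rho_n B_n$ into the $C$- and $D$-recurrences, and equating coefficients along the independent directions $(A_{n-1},B_{n-1})$ and $(A_{n-2},B_{n-2})$, produces exactly the required formulas $d_n=r_n b_n$ and $c_n=r_{n-1}r_n a_n$.

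The main obstacle is making the necessity direction watertight, namely showing that the $\rho_n$ are well-defined and nonzero so that the comparison of recurrences yields an honest equality of scalar coefficients rather than a relation that holds only modulo a degeneracy. Both points rest on the non-vanishing of the $a_j$'s via the determinant identity above; granted the usual convention that the continued fractions under consideration are nondegenerate, the rest of the argument is routine bookkeeping with the recurrences.
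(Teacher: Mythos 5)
The paper does not prove this statement at all: it is quoted verbatim from Lorentzen and Waadeland (\cite{Lisa}, p.~77) as a known equivalence criterion, so there is no internal proof to compare yours against. On its own merits your argument is correct and is essentially the standard textbook proof of this result. The sufficiency direction via $\rho_n=\prod_{i\le n} r_i$ and the induction $C_n=\rho_n A_n$, $D_n=\rho_n B_n$ is exactly right (just make sure you verify both seeds $n=-1$ and $n=0$, since the three-term recurrence needs two initial values, and note $\rho_{-1}=1$). The necessity direction is also sound: the determinant identity $A_{n-1}B_n-A_nB_{n-1}=(-1)^n a_1\cdots a_n$ gives the linear independence you need both to kill $\mu_n$ and to equate coefficients in the recurrences, and the nonvanishing of the $a_j$ and $c_j$ is indeed part of the definition of a (nonterminating) continued fraction, so your appeal to nondegeneracy is legitimate rather than a gap. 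One small point worth making explicit: equality of approximants should be read projectively, i.e.\ as $C_nB_n-D_nA_n=0$ in $\hat{\C}$, so that the case $B_n=D_n=0$ causes no trouble; with that phrasing your extraction of $\rho_n\ne 0$ from $(C_n,D_n)\ne(0,0)$ goes through cleanly.
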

\begin{thm}\label{thmeq} $F(x)$ is equivalent to $\tilde{F}(x)$ defined as
\begin{equation}\label{eq7}
\tilde{F}(x):=x+\frac{x}{1}\underset{+}{}\frac{1}{x}\underset{+}{}\frac{x}{1}\underset{+}{}\dots\underset{+}{}\frac{1}{x}\underset{+}{}
\frac{x}{1}\underset{+}{}\dots
\end{equation}
\end{thm}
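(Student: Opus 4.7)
The plan is to apply Theorem \ref{thm1} directly: I need to locate a sequence $\{r_j\}$ with $r_0 = 1$ and $r_j \ne 0$ that converts the canonical partial numerators/denominators of $F(x)$ into those of $\tilde{F}(x)$. First I would rewrite $F(x)$ in the form $b_0 + K(a_j/b_j)$ by reading off the simple continued fraction: $b_0 = x$, every $a_j = 1$, and $b_{2k-1} = 1/x^k$, $b_{2k} = x^{k+1}$ for $k \ge 1$. Similarly, $\tilde{F}(x)$ has $\tilde{b}_0 = x$ and, for $k \ge 1$, $c_{2k-1} = x$, $c_{2k} = 1$, $d_{2k-1} = 1$, $d_{2k} = x$. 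Both ``$b_0$'' terms agree, so it suffices to match the tails by Theorem \ref{thm1}.

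Next I would solve the defining relation $d_j = r_j b_j$ directly. For odd indices $j = 2k-1$ this forces $r_{2k-1} = 1/b_{2k-1} = x^k$, and for even indices $j = 2k$ it forces $r_{2k} = x / x^{k+1} = 1/x^k$. These are all nonzero as $x$ is a nonzero integer, so they form a legitimate equivalence sequence with the convention $r_0 = 1$.

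It then remains to verify the other condition $c_j = r_{j-1} r_j a_j = r_{j-1} r_j$. I would check this in the two parity cases: for $j = 2k-1$ one computes
\begin{equation*}
r_{2k-2} r_{2k-1} = \frac{1}{x^{k-1}} \cdot x^k = x = c_{2k-1},
\end{equation*}
where for $k=1$ the factor $r_0 = 1$ is used, and for $j = 2k$ one computes
\begin{equation*}
r_{2k-1} r_{2k} = x^k \cdot \frac{1}{x^k} = 1 = c_{2k}.
\end{equation*}
Both relations of Theorem \ref{thm1} are therefore satisfied, which yields $F(x) \sim \tilde{F}(x)$.

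There is no real obstacle here; the only mild subtlety is making sure the index shift is right at $j=1$ (where $r_0 = 1$ enters) and keeping the odd/even pattern of the original partial quotients straight. Once the correct alternating powers $r_{2k-1} = x^k$, $r_{2k} = x^{-k}$ are identified from $d_j = r_j b_j$, the verification of $c_j = r_{j-1} r_j$ is a one-line computation in each parity class, and Theorem \ref{thm1} delivers the equivalence.
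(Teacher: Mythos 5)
Your proposal is correct and is essentially the paper's own proof: the paper likewise applies Theorem \ref{thm1} to $F(x)-x$ with the sequence $r_{2k-1}=x^{k}$, $r_{2k}=x^{-k}$ (written there as $r_j = x^{(j+1)/2}$ for $j$ odd and $x^{-j/2}$ for $j$ even), merely leaving the parity-by-parity verification of $c_j = r_{j-1}r_j a_j$ to the reader, which you carry out explicitly.
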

\begin{proof}
In the notation of (\ref{eq6}), $F(x)$ is expressed as
$$F(x):=x+\frac{1}{1/x}\underset{+}{}\frac{1}{x^{2}}\underset{+}{}\frac{1}{1/{x^{2}}}\underset{+}{}\dots\underset{+}{}\frac{1}{x^{k}}\underset{+}{}
\frac{1}{1/{x^{k}}}\underset{+}{}\dots
$$ Let $K\left(\frac{a_{j}}{b_{j}}\right)=F(x)-x$ and apply theorem \ref{thm1} with the sequence $\{r_{j}\}$ defined as
$$r_{j}:=\left\{\begin{array}{ll}
                       x^\frac{{j+1}}{2} & for\;\;j\;\;odd \\
                       x^{-\frac{{j}}{2}} & for\;\;j\;\;even
                    \end{array}
  \right.$$ 
\end{proof}
From the relation $\frac{A_{j+1}}{B_{j+1}}=\frac{b_{j+1}A_{j}+a_{j+1}A_{j-1} }{b_{j+1}B_{j}+a_{j+1}B_{j-1}  }$ of the convergents $\frac{A_j}{B_j}$  of  (\ref{eq6}) we have the recurrence relations
\begin{equation}\label{eq8}
  \begin{array}{ccc}
    A_{j+1}(x) & =& b_{j+1}A_{j}(x)+a_{j+1}A_{j-1}(x) \\
    B_{j+1}(x) & =& b_{j+1}B_{j}(x)+a_{j+1}B_{j-1}(x)
  \end{array}
\end{equation}
This will be useful in proving a similar result for (\ref{eq5}). The first 11 terms for $A_{j}(x)$ and $B_{j}(x)$ are given in  table \ref{tb1}.

\begin{table}\label{tb1}
$$\begin{array}{|l||l||l|}
\hline j &A_{j}&B_{j} \\
\hline 0 &  x & 1 \\
1 & 2x & 1 \\
 2 &  x(2x+1) & x+1  \\
 3 &  x(4x+1) & 2x+1  \\
 4 &  x (4 x^2 + 3 x + 1) & 2 x^2  + 2 x + 1 \\
 5 &  x (8 x^2  + 4 x + 1) & 4 x^2  + 3 x + 1 \\
 6 &  x (8 x^3 + 8 x^2+ 4 x + 1 ) & 4 x^3 + 5 x^2 + 3 x + 1 \\
 7 &  x (16 x^3 + 12 x^2 + 5 x + 1) & 8 x^3 + 8 x^2 + 4 x + 1 \\
 8 &  x (16 x^4+ 20 x^3+ 13 x^2+ 5 x + 1) & 8 x^4+ 12 x^3+ 9 x^2+ 4 x + 1 \\
 9 &  x (32 x^4+ 32 x^3+ 18 x^2+ 6 x + 1) & 16 x^4+ 20 x^3+ 13 x^2+ 5 x + 1  \\
 10 & x (32 x^5+ 48 x^4+ 38 x^3+ 19 x^2+ 6 x + 1 ) & 16 x^5+ 28 x^4+ 25 x^3+ 14 x^2+ 5 x + 1  \\\hline
\end{array}$$
\caption{First 11 polynomials for $A_{j}$ and $B_{j}$. }
\end{table}

\begin{thm}\label{thm2}
The ${A_{j}(x)}$ and ${B_{j}(x)}$ from the convergents $\frac{A_{j}(x)}{B_{j}(x)}$ of (\ref{eq7}) are given by the recurrence relations;
\begin{equation}\label{eq9}
\begin{array}{ccc}
   A_{j}(x) &=& (2x+1)A_{j-2}(x)-xA_{j-4}(x)\\
   B_{j}(x) &=& (2x+1)B_{j-2}(x)-xB_{j-4}(x)
\end{array}
\end{equation}
\end{thm}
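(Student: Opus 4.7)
The plan is to deduce the four-term recurrence from the two-term convergent recurrence (\ref{eq8}) applied to the equivalent form $\tilde{F}(x)$. By Theorem \ref{thmeq}, the partial numerators and denominators of $\tilde{F}(x)$ alternate with period $2$: for $j\geq 1$, $(a_j,b_j)=(x,1)$ when $j$ is odd and $(a_j,b_j)=(1,x)$ when $j$ is even. Substituting into (\ref{eq8}) yields, for $j\geq 2$,
\[
A_j(x) = \begin{cases} A_{j-1}(x)+xA_{j-2}(x), & j \text{ odd,}\\ xA_{j-1}(x)+A_{j-2}(x), & j \text{ even,}\end{cases}
\]
together with the identical pair of relations for $B_j(x)$ (only the initial conditions differ). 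The task is then to eliminate the intermediate-index convergents so as to reach a recurrence that shifts by $2$.

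For $j\geq 4$ odd, I would first substitute the even-case formula $A_{j-1}(x)=xA_{j-2}(x)+A_{j-3}(x)$ into the odd-case formula above to get $A_j(x)=2xA_{j-2}(x)+A_{j-3}(x)$; then the odd-case formula at index $j-2$, namely $A_{j-2}(x)=A_{j-3}(x)+xA_{j-4}(x)$, gives $A_{j-3}(x)=A_{j-2}(x)-xA_{j-4}(x)$, yielding $A_j(x)=(2x+1)A_{j-2}(x)-xA_{j-4}(x)$. For $j\geq 4$ even, a parallel chain (using the odd-case at $j-1$ and the even-case at $j-2$) first produces $A_j(x)=(x+1)A_{j-2}(x)+x^2A_{j-3}(x)$; then the even-case identity $xA_{j-3}(x)=A_{j-2}(x)-A_{j-4}(x)$, multiplied by $x$ and substituted, again gives $(2x+1)A_{j-2}(x)-xA_{j-4}(x)$.

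The only delicate point is verifying that both parities really do collapse to the same formula; this is essentially a consequence of the period-$2$ structure of $\tilde{F}(x)$, but it must be checked by hand and the algebraic manipulations in the two cases lined up carefully. The identical argument works verbatim for $B_j(x)$. As a sanity check, one can compare against Table \ref{tb1}: for example, $(2x+1)A_4(x)-xA_2(x) = x(2x+1)(4x^2+2x+1) = x(8x^3+8x^2+4x+1) = A_6(x)$, and similarly for $B_6(x)$, confirming the recurrence at the first nontrivial index.
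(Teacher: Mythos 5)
Your proposal is correct and follows essentially the same route as the paper: both derive the four-term recurrence from the period-$2$ two-term recurrence of $\tilde{F}(x)$ by substituting twice and then eliminating the intermediate-index term, treating the two parities separately (the paper phrases the double substitution as collapsing the tail $\frac{x}{1}\underset{+}{}\frac{1}{x}$ into $\frac{x^2}{x+1}$, and wraps the computation in an induction whose hypothesis is never actually needed, exactly as your direct version shows).
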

\begin{proof}
To simplify the notation we will use ${A_j}$ and ${B_j}$. From (\ref{eq7})  first observe that for $j$ odd, $a_{j}=x$ and $b_{j}=1$, and for $j$ even, $a_{j}=1$ and $b_{j}=x$.  Let $\frac{A_j}{B_j}$ be the convergents of $\tilde{F}(x)$. The $A_{j}$ and $B_j$ have initial conditions $A_{0}=x$, $A_{1}=2x$, $A_{2}=x(2x+1)$, $B_{0}=B_{1}=1$ and $B_{2}={x+1}$. Also for $j=4$, (\ref{eq9}) holds. Suppose that for all $i\leq j$ (\ref{eq9}) is true. Then for $j$ even,
\begin{eqnarray*}
  \frac{A_{j+2}}{B_{j+2}}:=\tilde{z}_{j+2}(x)
   &=& x+\frac{x}{1}\underset{+}{}\frac{1}{x}\underset{+}{}\frac{x}{1}\underset{+}{}\dots\underset{+}{}\frac{1}{x}\underset{+}{}\frac{x^2}{x+1}\\
   &=& \frac{(x+1)A_{j}+x^{2}A_{j-1}}{(x+1)B_{j}+x^{2}B_{j-1}}.
\end{eqnarray*}
Since $j$ is even, by (\ref{eq8})  $A_{j}=xA_{j-1}+A_{j-2}$ and $B_{j}=xB_{j-1}+B_{j-2}$ so that,
\begin{eqnarray*}
   A_{j+2}&=& (x+1)A_{j}+x(A_{j}-A_{j-2})  \\
    &=& (2x+1)A_{j}-xA_{j-2},
\end{eqnarray*}
\begin{eqnarray*}
   B_{j+2}&=& (x+1)B_{j}+x(B_{j}-B_{j-2})  \\
    &=& (2x+1)B_{j}-xB_{j-2}.
\end{eqnarray*}
Similarly for $j$ odd,
\begin{eqnarray*}
  \frac{A_{j+2}}{B_{j+2}}:=\tilde{z}_{j+2}(x)
   &=& x+\frac{x}{1}\underset{+}{}\frac{1}{x}\underset{+}{}\frac{x}{1}\underset{+}{}\dots\underset{+}{}\frac{x}{1}\underset{+}{}\frac{1}{2x}\\
   &=& \frac{2xA_{j}+A_{j-1}}{2xB_{j}+B_{j-1}}
\end{eqnarray*}
and by (\ref{eq8})  $A_{j}=A_{j-1}+xA_{j-2}$ and $B_{j}=B_{j-1}+xB_{j-2}$. Thus
\begin{eqnarray*}
   A_{j+2} &=& 2xA_{j}+(A_{j}-xA_{j-2}) \\
    &=& (2x+1)A_{j}-xA_{j-2},
\end{eqnarray*}
 and
 \begin{eqnarray*}
   B_{j+2} &=& 2xB_{j}+(B_{j}-xB_{j-2}) \\
    &=& (2x+1)B_{j}-xB_{j-2}.
\end{eqnarray*}
We have the same recurrence for $j$ odd and $j$ even. Hence by induction the recurrence holds for all $j$. This is the recurrence for $F(x)$ by the equivalence $F(x)\sim \tilde{F}(x)$.
\end{proof}

It is clear from the initial conditions and from the recurrence relations (\ref{eq9}) that $A_j$'s and $B_j$'s are polynomials in $x$ with positive integer coefficients. We now give their generating functions.
\begin{prop}
$A_{2j}$ and $A_{2j+1}$ have generating functions
\begin{eqnarray}
  \label{eq10} \frac{x}{1-(2x+1)t+x t^2}  &=& \sum_{j=0}^{\infty}A_{2j}(x)t^{j}, \\
  \label{eq11} \frac{x (2-t)}{1-(2x+1)t+x t^2} &=& \sum_{j=0}^{\infty}A_{2j+1}(x)t^{j},
\end{eqnarray}
while $B_{2j}$ and $B_{2j+1}$ have generating functions
\begin{eqnarray*}
   \frac{1}{1-(2x+1)t+xt^2}  &=& \sum_{j=0}^{\infty}B_{2j+1}(x)t^{j}, \\
   \frac{(1-xt)}{1-(2x+1)t+xt^2} &=& \sum_{j=0}^{\infty}B_{2j}(x)t^{j}.
\end{eqnarray*}
\end{prop}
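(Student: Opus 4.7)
The plan is to exploit the fact that the four-step recurrence (\ref{eq9}) decouples into a pair of two-step recurrences once we separate by parity. Setting $c_j:=A_{2j}$, the recurrence $A_{2j}=(2x+1)A_{2(j-1)}-xA_{2(j-2)}$ holds for $j\ge 2$, and the same two-step recurrence holds for $d_j:=A_{2j+1}$, $e_j:=B_{2j+1}$, and $f_j:=B_{2j}$. All four generating functions will therefore share the denominator $1-(2x+1)t+xt^2$; only the initial conditions (read off from table \ref{tb1}) will differ.

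First I would record $c_0=A_0=x$, $c_1=A_2=x(2x+1)$, $d_0=A_1=2x$, $d_1=A_3=x(4x+1)$, $e_0=B_1=1$, $e_1=B_3=2x+1$, $f_0=B_0=1$, $f_1=B_2=x+1$. Next, for each sequence I would form the generating series $G(t)=\sum_{j\ge 0}c_j t^j$ (and analogously for the others), multiply by $1-(2x+1)t+xt^2$, and use the recurrence for $j\ge 2$ to see that all terms of order $t^j$ with $j\ge 2$ vanish. What remains is a linear polynomial in $t$ whose coefficients are determined by the initial data: the constant term is the zeroth initial value, and the coefficient of $t$ is (first initial value) $-(2x+1)\cdot$(zeroth initial value).

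Performing this bookkeeping gives numerators $x$, $2x-xt=x(2-t)$, $1$, and $1-xt$ for $A_{2j}$, $A_{2j+1}$, $B_{2j+1}$, and $B_{2j}$ respectively, matching (\ref{eq10}), (\ref{eq11}), and the two formulas for $B$. The only step requiring care is the justification that $A_j$ and $B_j$ actually satisfy the parity-separated two-step recurrence for every $j\ge 2$ (equivalently, every index $\ge 4$ in the original numbering), but this is immediate from theorem \ref{thm2} applied at indices $2j$ and $2j+1$. No part of the argument is deep — the only genuine obstacle is avoiding index-shift errors, and I would double-check the derivation by verifying that the first few coefficients of each rational function expansion reproduce the entries in table \ref{tb1}.
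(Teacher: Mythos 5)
Your proposal is correct and follows essentially the same route as the paper: multiply the generating series by $1-(2x+1)t+xt^2$, use the parity-separated two-step recurrence from theorem \ref{thm2} to kill all coefficients of $t^j$ with $j\ge 2$, and read off the linear numerator from the initial values (the paper carries this out explicitly for $A_{2j}$ and notes the other three cases are identical). Your numerators $x$, $x(2-t)$, $1$, and $1-xt$ all check out against the stated formulas.
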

\begin{proof}
From equation (\ref{eq9})
\begin{eqnarray*}
  \sum_{j=2}^{\infty}A_{2j}t^{j} &=& \sum_{j=2}^{\infty}\left((2x+1)A_{2j-2}-xA_{2j-4}\right)t^{j} \\
    &=& (2n+1)x\sum_{j=2}^{\infty}A_{2j-2}t^{j-1}-xt^{2}\sum_{j=2}^{\infty}A_{2j-4}t^{j-2}  \\
    &=& (2n+1)x\sum_{j=1}^{\infty}A_{2j-2}t^{j-1}-xt^{2}\sum_{j=2}^{\infty}A_{2j-4}t^{j-2}-(2x+1)tA_{0}
\end{eqnarray*}
hence
\begin{eqnarray*}
  \left(1-(2x+1)t+xt^{2}\right)\sum_{j=0}^{\infty}A_{2j}t^{j} &=&-(2x+1)tA_{0}+A_{0}+tA_{2}. \\
      &=& x
\end{eqnarray*}
The proof of (\ref{eq11}) and of $B_j$'s follows the same pattern as above.
\end{proof}
From the generating functions, we can observe relationships between the $A_j$ and $B_j$ polynomials. Here are some notable ones.

\begin{cor}
\begin{eqnarray}
  \label{eq12} A_{2j+1} &=& 2A_{2j}-A_{2j-2}\\
  \label{eq14} B_{2j+1} &=& \frac{1}{x} A_{2j} \\
   \label{eq15} B_{2j} &=& \frac{1}{x} A_{2j}-A_{2j-2}\\
   \label{eq16}        &=& B_{2j+1}-xB_{2j-1}
\end{eqnarray}
\end{cor}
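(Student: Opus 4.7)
The plan is to verify each of the four identities directly from the generating functions proved in the preceding proposition, exploiting the fact that all four share the common denominator $D(t) := 1 - (2x+1)t + xt^2$. Since the numerators are $x$, $x(2-t)$, $1$ and $1-xt$, the identities will reduce to linear relations among these numerators, read off coefficient by coefficient in $t$.

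I would begin with (\ref{eq14}) since it is almost immediate: dividing $\sum_{j\ge 0} A_{2j}(x)\,t^j = x/D(t)$ by $x$ yields $1/D(t) = \sum_{j\ge 0} B_{2j+1}(x)\,t^j$, so matching coefficients gives $B_{2j+1} = \tfrac{1}{x}A_{2j}$. For (\ref{eq12}), I would split $\frac{x(2-t)}{D(t)} = \frac{2x}{D(t)} - t\cdot\frac{x}{D(t)}$, recognising the right-hand side as $2\sum A_{2j}(x)\,t^j - t\sum A_{2j}(x)\,t^j$; extracting the coefficient of $t^j$ then gives $A_{2j+1} = 2A_{2j} - A_{2j-2}$, under the convention $A_{-2} := 0$ to cover the $j=0$ case. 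An entirely parallel decomposition $\frac{1-xt}{D(t)} = \frac{1}{D(t)} - xt\cdot\frac{1}{D(t)}$, combined with (\ref{eq14}), yields (\ref{eq15}).

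Finally, (\ref{eq16}) is a corollary of (\ref{eq14}) and (\ref{eq15}): rewriting (\ref{eq15}) as $B_{2j} = B_{2j+1} - A_{2j-2}$ and then applying (\ref{eq14}) at index $j-1$ to replace $A_{2j-2}$ by $xB_{2j-1}$ produces the desired relation. I do not expect any genuine obstacle; the only care required is tracking the $j=0$ boundary, equivalent to adopting the conventions $A_{-2} = B_{-1} = 0$, which are consistent with the initial data recorded in the table above.
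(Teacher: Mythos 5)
Your proposal is correct and matches the paper's intent exactly: the paper offers no written proof, merely noting that these relations ``can be observed from the generating functions,'' and your coefficient-by-coefficient decompositions of $x(2-t)/D(t)$ and $(1-xt)/D(t)$, together with the boundary conventions $A_{-2}=B_{-1}=0$, supply precisely the omitted details. The only cosmetic difference is that your decomposition of $(1-xt)/D(t)$ most directly yields (\ref{eq16}) first and then (\ref{eq15}) via (\ref{eq14}), rather than the order in which the paper lists them, which is immaterial.
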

\noindent There are other relationships of interest between $B_{j}$'s and $\frac{1}{x}A_{j}$'s, for example;
\begin{lem}\label{lemrel1}
Let $A_{j}$ and $B_{j}$ be defined as before (see theorem \ref{thm2}). Then
\begin{equation}\label{eq17}
  B_{j}=\frac{1}{2x}A_{j}+\frac{1}{2}B_{j-2}
\end{equation}
\begin{proof}
Proceed by induction on $j$ with base case $j=2$ using the initial conditions for $A_j$ and $B_{j}$ as well as their recurrence
relations. For $j=2$, $x+1=\frac{1}{2x}A_{2}+\frac{1}{2}B_{0}=B_{2}$. Suppose for all $i\leq j$, (\ref{eq17}) is true. Then
\begin{eqnarray*}
  B_{j+1} &=& (2x+1)B_{j-1}-xB_{j-3} \\
    &=& (2x+1)\left(\frac{1}{2x}A_{j-1}+\frac{1}{2}B_{j-3}\right)-x\left(\frac{1}{2x}A_{j-3}+\frac{1}{2}B_{j-5}\right) \\
    &=&\frac{1}{2x}\left((2x+1)A_{j-1}-x A_{j-3}\right)-\frac{1}{2}\left((2x+1)B_{j-3}-xB_{j-5}\right) \\
    &=& \frac{1}{2x}A_{j+1}+\frac{1}{2}B_{j-1}
\end{eqnarray*}
\end{proof}
\end{lem}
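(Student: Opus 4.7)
The key structural fact supplied by Theorem \ref{thm2} is that \emph{both} $\{A_j\}$ and $\{B_j\}$ satisfy the same linear recurrence $c_j = (2x+1)c_{j-2} - xc_{j-4}$. The cleanest route is therefore to define
$$C_j := \frac{1}{2x}A_j + \frac{1}{2}B_{j-2}$$
and observe that $C_j$, being an index-shifted linear combination of two solutions of that recurrence, also satisfies it. Since the recurrence decouples into two independent second-order recurrences on the even- and odd-indexed subsequences, two initial conditions in each parity class pin down the solution uniquely; hence to show $C_j = B_j$ for all $j \geq 2$ it suffices to verify (\ref{eq17}) at the four indices $j = 2, 3, 4, 5$, which can be read off directly from Table \ref{tb1}.

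Equivalently, one can induct on (\ref{eq17}) itself. Assuming the identity at indices $j$ and $j-2$, apply the $B$-recurrence to $B_{j+2} = (2x+1)B_j - xB_{j-2}$, substitute the inductive hypothesis to rewrite each term as a combination of an $\tfrac{1}{2x}A$-piece and a $\tfrac{1}{2}B$-piece, regroup, and then re-apply the $A$- and $B$-recurrences of Theorem \ref{thm2} in the opposite direction to collapse the resulting four-term combinations into $\tfrac{1}{2x}A_{j+2} + \tfrac{1}{2}B_j$. This is the direct route the author is likely to follow, and it is essentially a transcription of the argument showing that $C_j$ inherits the shared recurrence.

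The only real care needed is bookkeeping at small indices: the recurrence $c_j = (2x+1)c_{j-2} - xc_{j-4}$ requires $j \geq 4$, and the shifted term $B_{j-2}$ is only defined for $j \geq 2$, so one cannot shrink the base case down to a single value of $j$. Verifying the four consecutive base cases $j = 2,3,4,5$ from the table sidesteps any need to extend the sequences to negative indices, and no identity beyond the shared recurrence of Theorem \ref{thm2} is required. I therefore anticipate no genuine obstacle; the lemma is a direct algebraic consequence of the fact that $A_j$ and $B_j$ obey a common linear recurrence.
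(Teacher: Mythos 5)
Your proposal is correct and rests on the same idea as the paper's proof: both exploit the fact that $A_j$ and $B_j$ share the recurrence $c_j=(2x+1)c_{j-2}-xc_{j-4}$ of Theorem \ref{thm2}, and your second (``equivalent'') route is essentially a transcription of the paper's induction. If anything you are more careful than the paper, which verifies only the single base case $j=2$ even though the parity-decoupled recurrence (and the appearance of $B_{j-5}$ in the inductive step) really requires checking the four indices $j=2,3,4,5$ as you do.
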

We use the above lemma to show that the $B_{j}$ polynomials can be expressed in terms of the $A_{j}$ polynomials.
\begin{thm}
Let $A_{j}$ and $B_{j}$ be defined as before, then
\begin{eqnarray}
 \label{eq18} B_{2k} &=& \frac{A_{0}}{2^{k}x}+\frac{1}{x}\sum_{j=1}^{k}\frac{A_{2j}}{2^{k-j+1}} \\
 \label{eq19} B_{2k+1} &=& \frac{1}{x}\sum_{j=0}^{k}\frac{A_{2j+1}}{2^{k-j+1}}
\end{eqnarray}
\end{thm}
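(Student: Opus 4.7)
The plan is to prove both identities by iterating Lemma \ref{lemrel1}, which already gives $B_j = \frac{1}{2x}A_j + \frac{1}{2}B_{j-2}$. One application of the lemma to $B_{2k}$ peels off an $A_{2k}/(2x)$ contribution and leaves a $B_{2k-2}$ to recurse on, shrunk by a factor of $1/2$. Applying the lemma $k$ times in succession therefore gives
\[
B_{2k} \;=\; \sum_{i=1}^{k}\frac{A_{2k-2i+2}}{2^{i}x} \;+\; \frac{B_{0}}{2^{k}}.
\]
Re-indexing with $j=k-i+1$ turns the sum into $\frac{1}{x}\sum_{j=1}^{k}A_{2j}/2^{k-j+1}$. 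From the initial conditions recorded in Table \ref{tb1}, $B_0 = 1 = A_0/x$ (since $A_0 = x$), so $B_0/2^k = A_0/(2^k x)$, matching (\ref{eq18}) exactly.

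For (\ref{eq19}) the same argument applied $k$ times to $B_{2k+1}$ yields $\sum_{i=1}^{k} A_{2k-2i+3}/(2^{i} x) + B_1/2^k$, which after the re-indexing $j=k-i+1$ becomes $\frac{1}{x}\sum_{j=1}^{k}A_{2j+1}/2^{k-j+1}$. The key observation is that from Table \ref{tb1}, $B_1 = 1$ and $A_1 = 2x$, so $B_1/2^k = A_1/(2^{k+1}x)$, and this is precisely the missing $j=0$ summand. Combining everything gives (\ref{eq19}).

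Alternatively, each identity can be checked directly by induction on $k$: the base case $k=0$ is $B_0 = A_0/x$ (respectively $B_1 = A_1/(2x)$), and the inductive step applies Lemma \ref{lemrel1} once to $B_{2k+2}$ (respectively $B_{2k+3}$), substitutes the hypothesis, and absorbs the new $A$-term as the fresh $j=k+1$ summand after checking that the exponent $2^{(k+1)-(k+1)+1} = 2$ is correct. No genuine obstacle is anticipated; the only care needed is the routine book-keeping of the exponent $2^{k-j+1}$ under the index shift and recognising $B_0$ and $B_1$ as disguised $A_0/x$ and $A_1/(2x)$ respectively, which fold naturally into the closed forms.
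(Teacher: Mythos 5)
Your proposal is correct and rests on exactly the same key ingredient as the paper, namely Lemma \ref{lemrel1}: the paper carries out the induction on $k$ that you describe as your ``alternative,'' while your primary presentation simply unrolls that same recursion $k$ times, and your re-indexing and the identifications $B_0=A_0/x$, $B_1=A_1/(2x)$ all check out.
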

\begin{proof}
From the initial conditions, $\frac{A_{0}}{x}=B_{0}$ and $\frac{A_{0}}{2x}+\frac{A_{2}}{2x}=B_{2}$. Suppose for $i\leq j$ (\ref{eq18}) is true. Then by by (\ref{eq17})
\begin{eqnarray*}
  B_{2k+2} &=& \frac{1}{2x}A_{2k+2}+\frac{1}{2}B_{2k} \\
    &=& \frac{1}{2x}A_{2k+2}+\frac{1}{2}\left( \frac{A_{0}}{2^{k}x}+\frac{1}{x}\sum_{j=1}^{k}\frac{A_{2j}}{2^{k-j+1}} \right) \\
    &=&  \frac{A_{0}}{2^{k+1}x}+ \frac{1}{x}\left( \frac{A_{2k+2}}{2}+\sum_{j=1}^{k}\frac{A_{2j}}{2^{k-j+2}} \right)\\
    &=& \frac{A_{0}}{2^{k+1}x}+\frac{1}{x}\sum_{j=1}^{k+1}\frac{A_{2j}}{2^{k-j+2}}.
\end{eqnarray*}
Similarly, $\frac{A_{1}}{2x}=B_{1}$ and suppose for $i\leq j$ (\ref{eq19}) is true. Then by by (\ref{eq17})
\begin{eqnarray*}
  B_{2k+3} &=& \frac{1}{2x}A_{2k+3}+\frac{1}{2}B_{2k+1} \\
    &=& \frac{1}{2x}A_{2k+3}+\frac{1}{x}\sum_{j=0}^{k}\frac{A_{2j+1}}{2^{k-j+2}} \\
    &=& \frac{1}{x}\sum_{j=0}^{k+1}\frac{A_{2j+1}}{2^{k-j+2}},
\end{eqnarray*}
\noindent so by induction on $j$, (\ref{eq18}) and (\ref{eq19}) are true for all $j$.
\end{proof}


\section{Convergence of $F_{k}(x)$}\label{conv}
 \noindent In this section we prove that $F_{k}(x)$ converges to a quadratic irrational. We present two different proofs for the convergence.  Since $\tilde{F}(x)$ is periodic, if it converges, the proof of its convergence is straightforward. By the equivalence proved in theorem \ref{thmeq}, $F(x)$ will converge to the same limit.
\begin{prop}
If $\tilde{F}(x)$ converges, then the limit is a root of the quadratic $P(x,z)=z^{2}-(2x-1)z-x$.
\end{prop}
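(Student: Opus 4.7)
The plan is to exploit the fact that $\tilde F(x)$ is purely periodic with period two after the initial term $x$. Write $\tilde F(x) = x + T$, where
\[
T = \frac{x}{1}\underset{+}{}\frac{1}{x}\underset{+}{}\frac{x}{1}\underset{+}{}\frac{1}{x}\underset{+}{}\cdots
\]
is the tail. Because the tail repeats itself after two partial quotients, one expects the self-similarity relation
\[
T = \cfrac{x}{1+\cfrac{1}{x+T}}.
\]
Once this relation is justified, clearing denominators gives $T(x+T+1) = x(x+T)$, i.e.\ $T^2 + T - x^2 = 0$. Substituting $T = \tilde F(x) - x = z - x$ and expanding yields $z^2 - 2xz + x^2 + z - x - x^2 = 0$, that is, $z^2 - (2x-1)z - x = 0$, which is exactly $P(x,z)=0$.

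To make the self-similarity rigorous under only the assumption that $\tilde F(x)$ converges, I would work at the level of the even-indexed truncations already computed in the proof of Theorem \ref{thm2}. There it is shown that the convergents satisfy
\[
\frac{A_{j+2}}{B_{j+2}} = \frac{(x+1)A_j + x^2 A_{j-1}}{(x+1)B_j + x^2 B_{j-1}}
\]
for $j$ even, and since the convergents converge to the same limit $z$ along consecutive indices, taking $j \to \infty$ in the equivalent tail identity $T_{j+2} = x/(1 + 1/(x+T_j))$ (where $T_j$ denotes the tail truncated after $j$ steps) produces the desired equation for $T$.

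The one point that will require a small amount of care is confirming that convergence of $\tilde F(x)$ implies convergence of the tails $T_j$ and that the map $w \mapsto x/(1+1/(x+w))$ may be applied in the limit. This is straightforward since this map is a continuous Möbius transformation on a neighborhood of $T$ where the denominator does not vanish, but it is the only step that cannot be dispatched by pure algebra. Once this is handled, the algebraic reduction to $z^2 - (2x-1)z - x = 0$ is immediate, and by Theorem \ref{thmeq} the same quadratic characterizes the limit of $F(x)$.
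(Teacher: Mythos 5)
Your proof is correct and takes essentially the same route as the paper: both exploit the period-two self-similarity of $\tilde{F}(x)$ to obtain the identity $z = x + \frac{x}{1}\underset{+}{}\frac{1}{z}$ (your tail equation $T = x/(1+1/(x+T))$ with $T = z-x$ is the same relation), and then reduce algebraically to $z^{2}-(2x-1)z-x=0$. The only difference is that you spell out the limit passage via continuity of the M\"obius map on the truncated tails, a step the paper leaves implicit.
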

\begin{proof}
Suppose $\tilde{F}(x)$  converge to $z$, then,
\begin{eqnarray*}
  z &=& x+\frac{x}{1}\underset{+}{}\frac{1}{x}\underset{+}{}\frac{x}{1}\underset{+}{}\dots\underset{+}{}\frac{1}{x}\underset{+}{}
\frac{x}{1}\underset{+}{}\dots \\
    &=& x+\frac{x}{1}\underset{+}{}\frac{1}{z}
\end{eqnarray*}
from which we get $z^2+(1-2x)z-x=0$.
\end{proof}

Let $\alpha$ be an irrational number with a simple continued fraction $$\alpha=[a_{0},a_{1},a_{2},\dots].$$ We know by Lagrange that the continued fraction is periodic and does not terminate. Let $p_{n}/q_{n}$ be the convergents of the continued fraction for $\alpha$. Call $p_{n}/q_{n}$ an even convergent if $n$ is even, and an odd convergent if $n$ is odd.  By theorem 7.8 of (\cite{Stark}, p 193), $$\frac{p_{0}}{q_{0}}<\frac{p_{2}}{q_{2}}<\frac{p_{4}}{q_{4}}<\dots<\alpha<\dots<\frac{p_{5}}{q_{5}}<\frac{p_{3}}{q_{3}}<\frac{p_{1}}{q_{1}}.$$ Even convergents are strictly increasing while odd convergent are strictly decreasing.

We now prove the convergence of $F_{k}(x)$ by solving the recurrences (\ref{eq9}) by the method of characteristic roots (see for example \cite{Mott} p 300). In particular, we treat even and odd convergents of $F_{k}(x)$ separately.

\begin{thm}\label{thmzk}
Let $F_{k}(x)$ be defined as below for $k\geq1$,
\begin{equation}\label{eq20}
  F_{k}(x):=[x,\frac{1}{x},x^2,\frac{1}{x^2},x^3,\frac{1}{x^3},x^4,\frac{1}{x^4},\dots,x^k,\frac{1}{x^k}]
\end{equation}
Then $F_{k}(x)$ converges to a quadratic irrational that is the positive root of the polynomial $P(x,z)=z^{2}-(2x-1)z-x$.
\end{thm}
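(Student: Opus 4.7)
The plan is to follow the characteristic-root method sketched immediately before the statement. Theorem~\ref{thm2} says $A_j$ and $B_j$ both satisfy the four-term recurrence $u_j = (2x+1)u_{j-2} - x u_{j-4}$, which decouples into the even- and odd-index subsequences. Setting $v_k := u_{2k}$ (or $v_k := u_{2k+1}$) one gets the two-term recurrence $v_k = (2x+1)v_{k-1} - x v_{k-2}$, whose characteristic polynomial $\mu^2 - (2x+1)\mu + x$ has roots
\[
\mu_\pm = \frac{(2x+1) \pm \sqrt{4x^2+1}}{2}.
\]
For integer $x\ge 1$, the identities $\mu_+\mu_- = x$ and $\mu_+ + \mu_- = 2x+1$, together with $2x < \sqrt{4x^2+1} < 2x+1$, force $0 < \mu_- < 1 < \mu_+$, so in particular $\rho := \mu_-/\mu_+ \in (0,1)$.

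Next I would write each of the four sequences $\{A_{2k}\}$, $\{A_{2k+1}\}$, $\{B_{2k}\}$, $\{B_{2k+1}\}$ in closed form $c_+\mu_+^{\,k} + c_-\mu_-^{\,k}$ by plugging in the first two values read from Table~\ref{tb1} and solving a $2\times 2$ linear system. Calling the resulting expansions $A_{2k-1} = \alpha_+ \mu_+^{\,k} + \alpha_-\mu_-^{\,k}$ and $B_{2k-1} = \beta_+\mu_+^{\,k} + \beta_-\mu_-^{\,k}$, the key point is that $\beta_+\neq 0$ (as can be checked once the constants are computed explicitly). Then
\[
\lim_{k\to\infty} F_k(x) \;=\; \lim_{k\to\infty}\frac{A_{2k-1}}{B_{2k-1}} \;=\; \lim_{k\to\infty}\frac{\alpha_+ + \alpha_- \rho^{\,k}}{\beta_+ + \beta_- \rho^{\,k}} \;=\; \frac{\alpha_+}{\beta_+},
\]
a finite real limit. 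The analogous computation for the even-index convergents yields the same limit (as it must, since the continued-fraction truncations form a single convergent sequence whose two subsequences coexist).

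Once convergence is established, the proposition proved just before the theorem identifies the limit as a root of $P(x,z)=z^2-(2x-1)z-x$. The two roots are $\tfrac12\bigl((2x-1) \pm \sqrt{4x^2+1}\bigr)$, one lying in $(-1,0)$ and the other strictly greater than $2x-1$. Since every $A_j(x)$ and $B_j(x)$ has positive integer coefficients, $F_k(x) > 0$ for every positive integer $x$, so the limit must be the positive root. Irrationality is then immediate from the irrationality of $\sqrt{4x^2+1}$ (which holds because $4x^2 < 4x^2+1 < (2x+1)^2$ is squeezed strictly between consecutive squares).

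The main obstacle is the bookkeeping for $\alpha_\pm$ and $\beta_\pm$ and verifying $\beta_+\neq 0$; the rest is linear algebra plus the sign argument. A useful sanity check is that one expects to find $\alpha_+/\beta_+ = \mu_+ - 1 = \tfrac12\bigl((2x-1)+\sqrt{4x^2+1}\bigr)$, which matches the positive root of $P(x,z)$ directly and keeps the characteristic-root computation consistent with the fixed-point proposition.
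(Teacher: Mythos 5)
Your proposal is essentially the paper's own argument: the paper likewise decouples the recurrence of Theorem~\ref{thm2} into even- and odd-index subsequences, solves $v_k=(2x+1)v_{k-1}-xv_{k-2}$ via the characteristic roots $\tfrac12\bigl(2x+1\pm\sqrt{4x^2+1}\bigr)$, computes the constants from the initial values, and passes to the limit of the ratio to obtain $\tfrac12\bigl(2x-1+\sqrt{4x^2+1}\bigr)$ for both parities. The only difference is that the paper carries out the constant bookkeeping explicitly (so the nonvanishing of your $\beta_+$ is verified by direct computation), while you defer it; the substance is the same.
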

\begin{proof}
Since odd convergents are increasing and even convergents are decreasing, they will be treated differently. Let $a_{k}=A_{2k}$. Then from (\ref{eq9}) we get the recurrence $a_{k}=(2x+1)a_{k-1}-xa_{k-2}$ with characteristic polynomial $\lambda^{2}-(2x+1)\lambda+x=0$. Roots of this polynomial are  $\lambda_{1}=\frac{1}{2}(2x+1+\sqrt{4x^{2}+1})$ and $\lambda_{2}=\frac{1}{2}(2x+1-\sqrt{4x^{2}+1})$, both positive with $\lambda_{1}>\lambda_{2}$ and $a_{k}=\alpha \lambda_{1}^{k}+\beta \lambda_{2}^{k}$.

From the initial conditions $A_{0}=x$ and $A_2=2x^2+x$, we get the values of the constants $\alpha=\frac{\lambda_{2}x-2x^{2}-x}{-(\lambda_{1}-\lambda_{2})}$ and $\beta=\frac{\lambda_{1}x-2x^{2}-x}{\lambda_{1}-\lambda_{2}}$ so that
$$a_{k}=\frac{\lambda_{2}x-2x^{2}-x}{-(\lambda_{1}-\lambda_{2})} \lambda_{1}^{k}+\frac{\lambda_{1}x-2x^{2}-x}{\lambda_{1}-\lambda_{2}}\lambda_{2}^{k}.$$

Similarly let $b_{k}=B_{2k}$. Then from (\ref{eq9}), and the fact that the recurrences are the same,  $b_{k}=\alpha' \lambda_{1}^{k}+\beta' \lambda_{2}^{k}$. From the initial conditions $B_{0}=1$ and $B_2=x+1$, we get  $\alpha'=\frac{\lambda_{2}-x-1}{-(\lambda_{1}-\lambda_{2})}$ and $\beta'=\frac{\lambda_{1}-x-1}{\lambda_{1}-\lambda_{2}}$ so that
$$b_{k}=\frac{\lambda_{2}-x-1}{-(\lambda_{1}-\lambda_{2})} \lambda_{1}^{k}+\frac{\lambda_{1}-x-1}{\lambda_{1}-\lambda_{2}}\lambda_{2}^{k},$$
\begin{eqnarray*}
  \frac{a_{k}}{b_{k}} &=& \frac{(\lambda_{2}x-2x^{2}-x) \lambda_{1}^{k}-(\lambda_{1}x-2x^{2}-x) \lambda_{2}^{k}}{(\lambda_{2}-x-1)\lambda_{1}^{k}-(\lambda_{1}-x-1)\lambda_{2}^{k}} \\
   &=&  \frac{(\lambda_{2}x-2x^{2}-x)-(\lambda_{1}x-2x^{2}-x) (\lambda_{2}/ \lambda_{1})^{k}}{(\lambda_{2}-x-1)-(\lambda_{1}-x-1)(\lambda_{2}/ \lambda_{1})^{k}},
\end{eqnarray*}
 and recalling that $\lambda_{2}<\lambda_{1}$,
 \begin{equation}\label{eq21}
   \lim_{k\to\infty}\frac{a_{k}}{b_{k}}=\frac{\lambda_{2}x-2x^{2}-x}{\lambda_{2}-x-1}.
 \end{equation}
Substitute for $\lambda_{2}$ in equation \ref{eq21} and rationalize the denominator to get $$\lim_{k\to\infty}\frac{a_{k}}{b_{k}}=\frac{2x-1+\sqrt{4x^{2}+1}}{2}.$$

On the other hand, let $a_{k}=A_{2k-1}$. Since we have the same recurrence relation as before, $a_{k}=\gamma \lambda_{1}^{k}+\delta \lambda_{2}^{k}$. From the initial conditions $A_{1}=2x$ and $A_3=4x^2+x$, we get  $\gamma=\frac{2x\lambda_{2}-4x^{2}-x}{(\lambda_{1}\lambda_{2}-\lambda_{1}^{2})}$ and $\delta=\frac{2x\lambda_{1}-4x^{2}-x}{\lambda_{1}\lambda_{2}-\lambda_{2}^{2}}.$ Thus
$$a_{k}=\frac{2x\lambda_{2}-4x^{2}-x}{\lambda_{1}\lambda_{2}-\lambda_{1}^{2}} \lambda_{1}^{k}+\frac{2x\lambda_{1}-4x^{2}-x}{\lambda_{1}\lambda_{2}-\lambda_{2}^{2}}\lambda_{2}^{k}.$$

Similarly let $b_{k}=B_{2k-1}$. From (\ref{eq9}) and the fact that the recurrences are the same,  $b_{k}=\gamma' \lambda_{1}^{k}+\delta' \lambda_{2}^{k}.$ From the initial conditions $B_{1}=1$ and $B_{3}=2x+1$, we get  $\gamma'=\frac{\lambda_{2}-2x-1}{\lambda_{1}\lambda_{2}-\lambda_{1}^{2}}$ and $\delta'=\frac{\lambda_{1}-2x-1}{\lambda_{1}\lambda_{2}-\lambda_{2}^{2}}.$ Thus
$$b_{k}=\frac{\lambda_{2}-2x-1}{\lambda_{1}\lambda_{2}-\lambda_{1}^{2}} \lambda_{1}^{k}+\frac{\lambda_{1}-2x-1}{\lambda_{1}\lambda_{2}-\lambda_{2}^{2}}\lambda_{2}^{k}.$$
Hence
\begin{equation}\label{eq22}
   \lim_{k\to\infty}\frac{a_{k}}{b_{k}}=\frac{2x\lambda_{2}-4x^{2}-x}{\lambda_{2}-2x-1}.
 \end{equation}
Substitute $\lambda_{2}$ into (\ref{eq22}) and rationalize the denominator to get $$\lim_{k\to\infty}\frac{a_{k}}{b_{k}}=\frac{2x-1+\sqrt{4x^{2}+1}}{2}.$$ This completes the proof.
\end{proof}

In preparation for an inductive proof of theorem \ref{thmzk}, we give some results on linear combinations of products of the $A_{j}$ and $B_{j}$ polynomials that yield monomials.
\begin{lem} Let $\frac{A_{j}}{B_{j}}$ be the convergents of $F(n)$. For $j\geq 1,$
\begin{eqnarray}
 \label{eq23} A_{2j-2}A_{2j+2}-A_{2j}^{2} &=& -x^{j+2}, \\
 \nonumber B_{2j-2}B_{2j+2}-B_{2j}^{2} &=& x^{j+1},
\end{eqnarray}
and for $j\geq 2$,
\begin{eqnarray}
  \label{eq24} A_{2j-3}A_{2j+1}-A_{2j-1}^{2} &=& x^{j}, \\
  \nonumber B_{2j-3}B_{2j+1}-B_{2j-1}^{2} &=& -x^{j-1}.
\end{eqnarray}
\end{lem}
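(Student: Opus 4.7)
The plan is to reduce all four identities to a single uniform observation about sequences satisfying the recurrence from Theorem~\ref{thm2}. Concretely, the sequences $(A_{2k})$, $(A_{2k-1})$, $(B_{2k})$, $(B_{2k-1})$ all satisfy the same second-order recurrence $c_k = (2x+1)c_{k-1} - x c_{k-2}$ (this is exactly (\ref{eq9}) re-indexed so that consecutive terms of the subsequence are consecutive). For any such sequence, I would introduce the Casoratian-like quantity
\[
D_k := c_{k-1} c_{k+1} - c_k^2.
\]
The first step is to show that $D_k = x \, D_{k-1}$ for every $k$ in the valid range.

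To prove this key identity, I would substitute $c_{k+1} = (2x+1) c_k - x c_{k-1}$ on the left and expand:
\begin{align*}
D_k &= c_{k-1}\bigl((2x+1)c_k - x c_{k-1}\bigr) - c_k^2 \\
 &= (2x+1) c_{k-1} c_k - x c_{k-1}^2 - c_k^2.
\end{align*}
On the other hand, using $c_{k-2} = \bigl((2x+1)c_{k-1} - c_k\bigr)/x$ from the recurrence, a short calculation shows $x D_{k-1} = x(c_{k-2}c_k - c_{k-1}^2)$ equals the same right-hand side. Thus $D_k = x D_{k-1}$ and hence $D_k = x^{k-k_0} D_{k_0}$ for any starting index $k_0$ in the range of validity.

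Once this is established, the four identities follow by a single base-case verification in each row of the lemma. For $(\ref{eq23})$ apply the reduction to $c_k = A_{2k}$ and compute $D_1 = A_0 A_4 - A_2^2$ from the initial polynomials in Table~\ref{tb1}; I expect $D_1 = -x^3$, giving $D_j = -x^{j+2}$. For the $B_{2k}$ statement, similarly compute $B_0 B_4 - B_2^2 = x^2$ and propagate. For $(\ref{eq24})$ apply the reduction to $c_k = A_{2k-1}$, but now the smallest admissible index is $k=2$ (since $A_{-1}$ is not defined), so the base case is $D_2 = A_1 A_5 - A_3^2$; the recurrence then gives $D_j = x^{j-2} D_2$. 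The analogous computation for $B_{2k-1}$ produces $D_2 = B_1 B_5 - B_3^2 = -x$, yielding $-x^{j-1}$.

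No real obstacle is expected: the only conceptual step is recognizing that all four statements are instances of the same Casoratian recurrence $D_k = x D_{k-1}$, after which everything reduces to arithmetic on the small-index polynomials already recorded. The mildly delicate point to watch is the indexing shift for the odd-subscript sequences, which forces the base case at $k_0 = 2$ rather than $k_0 = 1$ and accounts for the different exponent shift between (\ref{eq23}) and (\ref{eq24}).
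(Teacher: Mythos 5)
Your proposal is correct and is essentially the paper's own argument: the paper proves $A_{2j}A_{2j+4}-A_{2j+2}^2 = x\,(A_{2j-2}A_{2j+2}-A_{2j}^2)$ by substituting the recurrence (\ref{eq9}) and inducting from the base case $A_0A_4-A_2^2=-x^3$, remarking that the other three identities follow identically with their own initial conditions. Your Casoratian formulation $D_k = xD_{k-1}$ merely packages that same induction uniformly, and your base-case values and the index shift for the odd-subscript sequences all check out.
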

\begin{proof}
From the initial conditions $A_{0}=x,\;\;A_{2}=2x^{2}+x$ and $A_{4}=4x^{3}+3x^{2}+x$, $A_{0}A_{4}-A_{2}^{2}=-x^{3}.$
Suppose (\ref{eq23}) holds for all integers up to $j$. Then
\begin{eqnarray*}
  A_{2j}A_{2j+4}-A_{2j+2}^{2} &=& A_{2j}\{(2x+1)A_{2j+2}-xA_{2j}\}-A_{2j+2}^{2} \\
    &=& A_{2j+2}\{(2x+1)A_{2j}\}-xA_{2j}^{2}-A_{2j+2}^{2} \\
    &=& A_{2j+2}\{A_{2j+2}+xA_{2j-2}\}-xA_{2j}^{2}-A_{2j+2}^{2} \\
    &=& xA_{2j-2}A_{2j+2}-xA_{2j}^{2} \\
    &=& -x^{j+3}
\end{eqnarray*}
where we have used the recurrence relations (\ref{eq9}) for $A_{j}$. By induction on $j$, equation (\ref{eq23}) is true for all $j$. The proofs of the other relations are identical to the above proof with the only difference being the initial conditions.
\end{proof}
\begin{rem}
This provides an alternative definition of $A_{j}(x)$ and $B_{j}(x)$ by non-linear recurrences. For $x=1$ this reduces to the well known non-linear recurrences that define the Fibonacci numbers.
\end{rem}
\begin{lem}\label{mainlem} Let $A_{j}$ and $B_{j}$ be as before. Then for $j\geq 0,$
\begin{equation}\label{eq25}
  A_{2j}^{2}-(2x-1)A_{2j}B_{2j}-xB_{2j}^{2} = -x^{j+2},
\end{equation}
and for $j\geq1,$
\begin{equation}\label{eq26}
  A_{2j-1}^{2}-(2x-1)A_{2j-1}B_{2j-1}-xB_{2j-1}^{2} = x^{j}.
\end{equation}
\end{lem}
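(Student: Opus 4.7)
The strategy is to reduce both identities to the non-linear recurrence \eqref{eq23}, using the corollary relations \eqref{eq12}, \eqref{eq14}, and \eqref{eq15} to eliminate every occurrence of a $B$-polynomial in favor of $A$-polynomials. First I would verify the small cases $j=0,1$ of \eqref{eq25} and $j=1,2$ of \eqref{eq26} by direct computation from the initial conditions in table \ref{tb1}, since the reductions below become meaningless once a shifted index such as $2j-4$ or $2j-6$ would fall below zero.

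For \eqref{eq25} with $j\geq 2$, my first step is to substitute $B_{2j}=\tfrac{1}{x}A_{2j}-A_{2j-2}$ from \eqref{eq15}; after expanding and collecting, the coefficient of $A_{2j}^{2}$ simplifies to $-1$ and the entire left-hand side reduces to
\[
-A_{2j}^{2}+(2x+1)A_{2j}A_{2j-2}-xA_{2j-2}^{2}.
\]
I would then rewrite the recurrence \eqref{eq9} as $(2x+1)A_{2j-2}=A_{2j}+xA_{2j-4}$ and apply it to the middle summand, so that the whole expression collapses to $x\bigl(A_{2j}A_{2j-4}-A_{2j-2}^{2}\bigr)$. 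The identity \eqref{eq23} evaluated at index $j-1$ identifies this bracket as $-x^{j+1}$, giving the required value $-x^{j+2}$.

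For \eqref{eq26} with $j\geq 3$ the plan is analogous, using both corollary identities \eqref{eq12} and \eqref{eq14}: substitute $A_{2j-1}=2A_{2j-2}-A_{2j-4}$ and $B_{2j-1}=\tfrac{1}{x}A_{2j-2}$ into the left-hand side. Term collection produces
\[
\tfrac{1}{x}\bigl[A_{2j-2}^{2}-(2x+1)A_{2j-2}A_{2j-4}+xA_{2j-4}^{2}\bigr],
\]
and a second application of the recurrence in the form $(2x+1)A_{2j-4}=A_{2j-2}+xA_{2j-6}$ reduces the bracket to $-x\bigl(A_{2j-2}A_{2j-6}-A_{2j-4}^{2}\bigr)$. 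Identity \eqref{eq23}, now evaluated at index $j-2$, recognizes this final bracket as $-x^{j}$, so the total is $x^{j}$.

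The main obstacle I anticipate is the careful algebraic bookkeeping in the two term-collection steps, where three consecutive $A$-indices and a factor of $1/x$ must interact cleanly: one needs to check that the fractional coefficients cancel exactly with the power of $x$ produced by \eqref{eq23}, and that the sign is correct after pivoting the recurrence on the middle index. An alternative direct induction on $j$ using only the recurrences \eqref{eq9} is possible but would require introducing and propagating in parallel the auxiliary cross-term $2A_{2j}A_{2j-2}-(2x-1)(A_{2j}B_{2j-2}+A_{2j-2}B_{2j})-2xB_{2j}B_{2j-2}$, effectively doubling the induction overhead; the reduction route above avoids this by leveraging the already-proved \eqref{eq23}.
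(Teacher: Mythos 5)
Your proof is correct, but it takes a genuinely different route from the paper's. The paper proves both identities by a two-step induction on $j$: it expands the quadratic form at index $j+1$ via the linear recurrence (\ref{eq9}), invokes the inductive hypothesis at $j$ and $j-1$, and is then left with exactly the cross term you mention at the end, which it disposes of using (\ref{eq15}) (resp.\ (\ref{eq14})) together with (\ref{eq23}) (resp.\ (\ref{eq24})). Your argument dispenses with the induction altogether: by eliminating the $B$-polynomials up front via (\ref{eq15}) for the even case and via (\ref{eq12}) and (\ref{eq14}) for the odd case, each identity collapses in one pass to a single instance of (\ref{eq23}) after pivoting the recurrence on the middle index. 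I checked the two term collections: the coefficient of $A_{2j}^2$ is indeed $-1$ and that of $A_{2j}A_{2j-2}$ is $2x+1$ in the even case, and the odd case produces $\tfrac1x\bigl[A_{2j-2}^2-(2x+1)A_{2j-2}A_{2j-4}+xA_{2j-4}^2\bigr]$ as you claim, so the reductions and the index ranges ($j\ge2$ and $j\ge3$, with the small cases checked directly) are all sound. What your route buys is brevity and a lighter logical footprint --- you never need the companion identity (\ref{eq24}) for the $B$'s, only (\ref{eq23}) for the $A$'s, and you avoid the paper's lengthy cross-term bookkeeping; what it costs is a dependence on the corollary identities (\ref{eq12}), (\ref{eq14}), (\ref{eq15}), but the paper's own proof already leans on (\ref{eq14}) and (\ref{eq15}) anyway (and they are immediate from the generating functions), so this is no real loss.
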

\begin{proof}
From the initial conditions $A_{0}=x$ and $B_{0}=1$,   $A_{0}^{2}-(2x-1)A_{0}B_{0}-xB_{0}^{2}=-x^{2}$. Suppose equation (\ref{eq25}) is true for all $i\leq j$. By using the recurrence relations (\ref{eq9}),
$$A_{2j+2}^{2}-(2x-1)A_{2j+2}B_{2j+2}-xB_{2j+2}^{2}=[(2x+1)A_{2j}-xA_{2j-2}]^{2}$$
\begin{eqnarray}
   \nonumber && -(2x-1)((2x+1)A_{2j}-xA_{2j-2})((2x+1)B_{2j}-xB_{2j-2})-x((2x+1)B_{2j}-xB_{2j-2})^{2} \\
   \nonumber &=& (2x+1)^{2}(A_{2j}^{2}-(2x-1)A_{2j}B_{2j}-xB_{2j}^{2})+x^{2}(A_{2j-2}^{2}-(2x-1)A_{2j-2}B_{2j-2}-xB_{2j-2}^{2})  \\
   \nonumber && -(2x+1)(2xA_{2j-2}A_{2j}-(2x-1)(xA_{2j}B_{2j-2}+xA_{2j-2}B_{2j})-2x^{2}B_{2j-2}B_{2j}) \\
   \nonumber &=& -(2x+1)^{2}x^{j+2}-x^{j+3} \\
   \nonumber && -(2x+1)[2xA_{2j-2}A_{2j}-(2x-1)(xA_{2j}B_{2j-2}+xA_{2j-2}B_{2j})-2x^{2}B_{2j-2}B_{2j}].
\end{eqnarray}
It suffices to show that the square bracket  equals $-(2n+1)n^{j+2}$. Using (\ref{eq15}) and (\ref{eq23}) we get
$$2xA_{2j-2}A_{2j}-(2x-1)(xA_{2j}B_{2j-2}+xA_{2j-2}B_{2j})-2x^{2}B_{2j-2}B_{2j}$$
\begin{eqnarray}
  \nonumber  &=& 2xA_{2j-2}A_{2j}-(2x-1)(A_{2j}(A_{2j-2}-xA_{2j-4})+A_{2j-2}(A_{2j}-xA_{2j-2}))\\
  \nonumber  &&  -2(A_{2j-2}-xA_{2j-4})(A_{2j}-xA_{2j-2}) \\
  \nonumber  &=& -2xA_{2j-2}A_{2j}+x(2x+1)(A_{2j-2}^{2}+A_{2j-4}A_{2j})-2x^{2}A_{2j-4}A_{2j-2} \\
  \nonumber  &=& -2xA_{2j-2}(A_{2j}+xA_{2j-4})+x(2x+1)(2A_{2j-2}-x^{j+1}) \\
  \nonumber &=& -2xA_{2j-2}((2x+1)A_{2j-2})+x(2x+1)(2A_{2j-2}-x^{j+1}) \\
  \nonumber  &=& -(2x+1)x^{j+2}.
\end{eqnarray}
Substituting back gives $$A_{2j+2}^{2}-(2x-1)A_{2j+2}B_{2j+2}-xB_{2j+2}^{2}=-x^{j+3}.$$

Similarly, $A_{1}=2x$ and $B_{1}=1$ so that $A_{1}^{2}-(2x-1)A_{1}B_{1}-xB_{1}^{2}=x$. Suppose equation (\ref{eq26}) is true for all $i\leq j$. By using the recurrence relations (\ref{eq9}), we get
$$A_{2j+1}^{2}-(2x-1)A_{2j+1}B_{2j+1}-xB_{2j+1}^{2}=[(2x+1)A_{2j-1}-xA_{2j-3}]^{2}$$
\begin{eqnarray}
   \nonumber && -(2x-1)((2x+1)A_{2j-1}-xA_{2j-3})((2x+1)B_{2j-1}-xB_{2j-3})-x((2x+1)B_{2j-1}-xB_{2j-3})^{2} \\
   \nonumber &=& (2x+1)^{2}(A_{2j-1}^{2}-(2x-1)A_{2j-1}B_{2j-1}-xB_{2j-1}^{2})+x^{2}(A_{2j-3}^{2}-(2x-1)A_{2j-3}B_{2j-3}-xB_{2j-3}^{2})  \\
   \nonumber && -(2x+1)(2xA_{2j-3}A_{2j-1}-(2x-1)(xA_{2j-1}B_{2j-3}+xA_{2j-3}B_{2j-1})-2x^{2}B_{2j-3}B_{2j-1}) \\
   \nonumber &=& (2x+1)^{2}x^{j}+x^{j+1} \\
   \nonumber && -(2x+1)[2xA_{2j-3}A_{2j-1}-(2x-1)(xA_{2j-1}B_{2j-3}+xA_{2j-3}B_{2j-1})-2x^{2}B_{2j-3}B_{2j-1}].
\end{eqnarray}
It suffices to show that the square bracket in the equation above equals $(2x+1)x^{j}$. Using (\ref{eq13}), (\ref{eq14}) and (\ref{eq24}), we get
$$2xA_{2j-3}A_{2j-1}-(2x-1)(xA_{2j-1}B_{2j-3}+xA_{2j-3}B_{2j-1})-2x^{2}B_{2j-3}B_{2j-1}$$
\begin{eqnarray}
  \nonumber  &=& 2xA_{2j-3}A_{2j-1}-(2x-1)(A_{2j-1}(A_{2j-3}-xA_{2j-5})+A_{2j-3}(A_{2j-1}-xA_{2j-3})) \\
  \nonumber  && -2(A_{2j-3}-xA_{2j-5})(A_{2j-1}-xA_{2j-3}) \\
  \nonumber  &=& -2xA_{2j-3}A_{2j-1}+x(2x+1)(A_{2j-3}^{2}+A_{2j-5}A_{2j-1})-2x^{2}A_{2j-5}A_{2j-3} \\
  \nonumber  &=& -2xA_{2j-2}(A_{2j}+xA_{2j-4})+x(2x+1)(2A_{2j-2}-x^{j+1}) \\
   \nonumber &=& -2xA_{2j-3}((2x+1)A_{2j-3})+x(2x+1)(2A_{2j-3}+x^{j-1}) \\
  \nonumber  &=& (2x+1)x^{j}.
\end{eqnarray}
Substituting back gives $$A_{2j+1}^{2}-(2x-1)A_{2j+1}B_{2j+1}-xB_{2j+1}^{2}=x^{j+1}.$$ By induction on $j$, (\ref{eq25}) and (\ref{eq26}) are true for all $j$.
\end{proof}
We now use lemma \ref{mainlem} to prove the convergence of $F_{k}(x)$ in a manner that quantifies how close the convergents are to the limit.

\begin{thm}\label{thm15}
For $k\geq1$, let
\begin{equation}\label{eq20}
  F_{k}(x):=[x,\frac{1}{x},x^2,\frac{1}{x^2},x^3,\frac{1}{x^3},x^4,\frac{1}{x^4},\dots,x^k,\frac{1}{x^k}],
\end{equation}
 and
\begin{equation}\label{eq28}
  F_{k}^{*}(x):=[x,\frac{1}{x},x^2,\frac{1}{x^2},x^3,\frac{1}{x^3},x^4,\frac{1}{x^4},\dots,x^k]
\end{equation} be truncations of $F(x)$ to give odd and even convergents respectively. Then $F_{k}(x)$ and $F_{k}^{*}(x)$ converge to the positive root of $P(x,z)=z^{2}-(2x-1)z-x$.
\end{thm}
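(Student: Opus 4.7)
The strategy is to interpret Lemma \ref{mainlem} as a quantitative statement of how nearly the convergents $z_j := A_j/B_j$ satisfy $P(x,\cdot)=0$. Dividing the two identities of that lemma by $B_{2j}^{\,2}$ and $B_{2j-1}^{\,2}$ respectively yields
\begin{equation*}
P(x,z_{2j}) \;=\; -\frac{x^{\,j+2}}{B_{2j}^{\,2}}, \qquad P(x,z_{2j-1}) \;=\; \frac{x^{\,j}}{B_{2j-1}^{\,2}},
\end{equation*}
which are the explicit error estimates referenced just before the theorem. It therefore suffices to show both right-hand sides tend to zero and then identify the common limit of $(z_j)$.

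To control the growth of $B_j$, recall from Theorem \ref{thm2} that $B_j = (2x+1)B_{j-2} - xB_{j-4}$, whose characteristic polynomial $\lambda^2-(2x+1)\lambda+x$ has roots $\lambda_1>\lambda_2>0$ with $\lambda_1\lambda_2=x$ and $\lambda_1+\lambda_2=2x+1$. In particular $\lambda_2/\lambda_1 = x/\lambda_1^{\,2} < 1$. Writing the closed forms $B_{2j} = c_1\lambda_1^{\,j}+c_2\lambda_2^{\,j}$ and $B_{2j-1} = c_1'\lambda_1^{\,j}+c_2'\lambda_2^{\,j}$ obtained from the initial conditions (as in the proof of Theorem \ref{thmzk}), a direct check gives $c_1,c_1'>0$. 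Hence $B_{2j}^{\,2}$ and $B_{2j-1}^{\,2}$ grow like $\lambda_1^{\,2j}$ up to positive constants, and the two right-hand sides above are $O((\lambda_2/\lambda_1)^j)$, which tends to zero geometrically.

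Once $P(x,z_j)\to 0$ is established, convergence follows from positivity. Every $A_j$ and $B_j$ is a polynomial with positive integer coefficients, so every $z_j$ is positive. The polynomial $P(x,\cdot)$ has a unique non-negative root $r_+ = \tfrac{1}{2}(2x-1+\sqrt{4x^2+1})$, and $P'(x,r_+) = \sqrt{4x^2+1}\neq 0$; hence every sufficiently small neighborhood of $r_+$ in $(0,\infty)$ is characterized by smallness of $|P(x,\cdot)|$. Therefore $z_j\to r_+$. Since $F_k(x)$ and $F_k^{*}(x)$ are the odd- and even-indexed convergents of $\tilde F(x)\sim F(x)$, both subsequences of $(z_j)$ converge to the common limit $r_+$.

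The main delicacy is ensuring that the leading coefficients $c_1,c_1'$ in the closed-form expansion of $B_j$ are strictly positive, so that $B_j$ genuinely grows like $\lambda_1^{\,j}$ rather than collapsing to the subdominant rate $\lambda_2^{\,j}$. This is settled by substituting the initial conditions $B_0=1,\,B_2=x+1$ and $B_1=1,\,B_3=2x+1$; alternatively it is forced a posteriori by the observation that, for integer $x\geq 1$, the $B_j(x)$ form a strictly increasing sequence of positive integers, which precludes sub-exponential behavior.
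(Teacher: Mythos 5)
Your proposal is correct and follows essentially the same route as the paper's proof: both divide the identities of Lemma \ref{mainlem} by $B_{2j}^{2}$ and $B_{2j-1}^{2}$ to obtain $P(x,z_{2j})=-x^{j+2}/B_{2j}^{2}$ and $P(x,z_{2j-1})=x^{j}/B_{2j-1}^{2}$, show these quantities tend to zero, and then use positivity of the convergents to single out the positive root of $P(x,z)$. The only divergence is in how the denominators are bounded below: the paper uses that $B_{2k-1}(x)$ is a degree-$(k-1)$ polynomial with positive integer coefficients (a bound it states only for $x>1$), whereas you use the characteristic-root closed form together with $\lambda_{1}\lambda_{2}=x$ to get a geometric rate $O((\lambda_{2}/\lambda_{1})^{j})$, a marginally cleaner estimate that also covers $x=1$.
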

\begin{proof}
Since $F_{k}(x)=\frac{A_{2k-1}}{B_{2k-1}}$,  equation (\ref{eq26}) yields
\begin{eqnarray}\label{conveq1}
  \nonumber P(x,F_{k}(x)) &=& \frac{A_{2k-1}^{2}-(2x-1)A_{2k-1}B_{2k-1}-xB_{2k-1}^{2}}{B_{2k-1}^{2}} \\
   &=& \frac{x^{k}}{B_{2k-1}^{2}(x)}.
\end{eqnarray}
Here $B_{2k-1}(x)$ has positive integer coefficients and $\deg(B_{2k-1}(x))=k-1$, so for $x>1$,
\begin{equation*}
  \lim_{k\to\infty} P(x,F_{k}(x)) =  \lim_{k\to\infty}\frac{x^{k}}{B_{2k-1}^{2}(x)}  \leq  \lim_{k\to\infty}\frac{1}{x^{k^{2}-3k+1}}  =0.
\end{equation*}
Since $F_{k}^{*}(x)=\frac{A_{2k-2}}{B_{2k-2}}$, equation (\ref{eq25}) yields
\begin{eqnarray}\label{conveq2}
  \nonumber P(x,F_{k}^{*}(x)) &=& \frac{A_{2k-2}^{2}-(2x-1)A_{2k-2}B_{2k-2}-xB_{2k-2}^{2}}{B_{2k-2}^{2}} \\
   &=& \frac{-x^{k+1}}{B_{2k-2}^{2}(x)}.
\end{eqnarray}
As before, $B_{2k}(x)$ has positive integer coefficients and $\deg(B_{2k-2}(x))=k-1$, so for $x>1$,
\begin{equation*}
  \lim_{k\to\infty} P(x,F_{k}^{*}(x))=  \lim_{k\to\infty}\frac{-x^{k+1}}{B_{2k-2}^{2}(x)} \leq  \lim_{k\to\infty}\frac{-1}{x^{k^{2}-3k}} =0.
\end{equation*}
To show that they converge to the positive root of $P(x,z)$, write $P(x,z)=(z-z_{1})(z-z_{2})$ where $z_{1}=\frac{1}{2}(2x-1+\sqrt{4x^{2}+1})$ and $z_{2}=\frac{1}{2}(2x-1-\sqrt{4x^{2}+1})$. Note that $F_{k}(x)$ and $F_{k}^{*}(x)$ are all positive and so they converge to $z_{1}$.
\end{proof}

The evaluations of the function $P(x,z)$ at the convergents of the continued fraction is analogous to the known result that $u^{2}-xu-1$ evaluated at $[x,\dots,x]$ with $n$ partial quotients of $x$ is $$\frac{(-1)^{n}}{F_{n}^{2}(x)}$$ where $F_{n}(x)$ is the Fibonacci polynomial of degree $n$. This result implies that $[x,x,x,\dots]$ converges to $\frac{1}{2}(x+\sqrt{4+x^{2}})$, a root of $u^{2}-xu-1$, for $x\geq1$.


\section{A generalization of $F_{k}(x)$}\label{gen}
We now give a generalization of (\ref{eq5}) by introducing a parameter $`s$' to get a continued fraction of the form (\ref{eq4}). In this section, we just present results and omit the proofs since they have the same construction as the proofs already presented. For a positive integer $s$, define $F(x,s)$ as;
\begin{equation}\label{eq28}
  F(x,s):=[x,\frac{s}{x},x^2,\frac{s}{x^2},x^3,\frac{s}{x^3},x^4,\frac{s}{x^4},\dots,x^k,\frac{s}{x^k},\dots]
\end{equation}
then $F(x,s)$ has an equivalent presentation
\begin{equation}\label{eq29}
\tilde{F}(x,s):=x+\frac{x}{s}\underset{+}{}\frac{1}{x}\underset{+}{}\frac{x}{s}\underset{+}{}\dots\underset{+}{}\frac{1}{x}\underset{+}{}
\frac{x}{s}\underset{+}{}\dots
\end{equation}
It is straightforward to verify the equivalence using  theorem (\ref{thm1}) with the sequence $\{r_{j}\}$ defined earlier. Clearly, $\tilde{F}(x,s)$ is periodic and converges to a root of the polynomial $P(x,z,s)=sz^2-((s+1)x-1)z-x$. By the  equivalence to $\tilde{F}(x,s)$, $F(x,s)$ converges to the same limit. To see this, consider the convergents $\frac{A_{j}(x,s)}{B_{j}(x,s)}$ of $F(x,s)$. The polynomials $A_{j}(x,s)$ and $B_{j}(x,s)$  have initial conditions $A_{0}=x,\;\;A_{1}=x(s+1),\;\;A_{2}=x((s+1)x+1),\;\;B_{0}=1,\;\;B_{1}=s$ and $B_{2}=xs+1$ with recurrence relations
\begin{equation}\label{eq30}
\begin{array}{ccc}
   A_{j}(x,s) &=& ((s+1)x+1)A_{j-2}(x,s)-xA_{j-4}(x,s)\\
   B_{j}(x,s) &=& ((s+1)x+1)B_{j-2}(x,s)-xB_{j-4}(x,s)
\end{array}
\end{equation}
From the recurrence relations (\ref{eq30}), we have the generating functions,
\begin{eqnarray*}
   \frac{x}{1-((s+1)x+1)t+x t^2}  &=& \sum_{j=0}^{\infty}A_{2j}(x,s)t^{j} \\
   \frac{x ((s+1)-t)}{1-((s+1)x+1)t+x t^2} &=& \sum_{j=0}^{\infty}A_{2j+1}(x,s)t^{j} \\
   \frac{s}{1-((s+1)x+1)t+x t^2}  &=& \sum_{j=0}^{\infty}B_{2j+1}(x,s)t^{j} \\
   \frac{(1-xt)}{1-((s+1)x+1)t+x t^2} &=& \sum_{j=0}^{\infty}B_{2j}(x,s)t^{j}
\end{eqnarray*}
From the generating functions, we establish relationships between the $A_{j}(x,s)$ and $B_{j}(x,s)$ polynomials given by
\begin{eqnarray*}
  A_{2j+1}(x,s) &=& (s+1)A_{2j}(x,s)-A_{2j-2}(x,s) \\
  xB_{2j+1}(x,s) &=& sA_{2j}(x,s) \\
 sB_{2j}(x,s) &=& B_{2j+1}(x,s)-xB_{2j-1}(x,s) \\
     &=&\frac{1}{x} A_{2j}(x,s)-A_{2j-2}(x,s)
\end{eqnarray*}

It can be shown that  for $j\geq 0,$
\begin{equation}\label{eq31}
  A_{2j}(x,s)^{2}-(2x-1)A_{2j}(x,s)B_{2j}(x,s)-xB_{2j}(x,s)^{2} = -x^{j+2},
\end{equation}
and for $j\geq1,$
\begin{equation}\label{eq32}
  A_{2j-1}(x,s)^{2}-(2x-1)A_{2j-1}(x,s)B_{2j-1}(x,s)-xB_{2j-1}(x,s)^{2} = x^{j}s.
\end{equation}
Thus one finds from equation (\ref{eq32}) that for odd convergents $F_{k}(x,s)$
\begin{equation}
   P(x,F_{k}(x,s)) =  \frac{x^{k}s}{B_{2k-1}^{2}(x,s)}.
\end{equation}
Here, $B_{2k-1}(x,s)$ has positive integer coefficients, has $s$ as a factor and $\deg(B_{2k-1}(x))=k-1$. Hence for $x>1$,
\begin{equation*}
  \lim_{k\to\infty} P(x,F_{k}(x,s)) =  \lim_{k\to\infty}\frac{x^{k}s}{B_{2k-1}^{2}(x)}  \leq  \lim_{k\to\infty}\frac{1}{x^{k^{2}-3k+1}s}  =0.
\end{equation*}
Similarly one finds from equation (\ref{eq31}) that for even convergents $F_{k}^{*}(x,s)$
\begin{equation}
   P(x,F_{k}^{*}(x,s)) =  \frac{-x^{k+1}}{B_{2k-2}^{2}(x,s)}.
\end{equation}
As before, $B_{2k}(x,s)$ has positive integer coefficients and $\deg(B_{2k-2}(x))=k-1$. Hence for $x>1$,
\begin{equation*}
  \lim_{k\to\infty} P(x,F_{k}^{*}(x,s))=  \lim_{k\to\infty}\frac{-x^{k+1}}{B_{2k-2}^{2}(x,s)} \leq  \lim_{k\to\infty}\frac{-1}{x^{k^{2}-3k}} =0.
\end{equation*}
Since for all $k$, $F_{k}(x,s)$ and $F_{k}^{*}(x,s)$ are positive, $F(x,s)$ is positive and converges to the positive root of the quadratic $$P(x,z,s)=sz^2-((s+1)x-1)z-x.$$
This proves that
\begin{thm}
The continued fraction $$F(x,s):=[x,\frac{s}{x},x^2,\frac{s}{x^2},x^3,\frac{s}{x^3},x^4,\frac{s}{x^4},\dots,x^k,\frac{s}{x^k},\dots]$$ converges to the positive root of the quadratic $P(x,z,s)=sz^2-((s+1)x-1)z-x.$
\end{thm}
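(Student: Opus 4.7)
The plan is to imitate, essentially verbatim, the argument already carried out for the case $s=1$ in Section \ref{conv}, keeping track of the extra parameter $s$. First I would invoke Theorem \ref{thm1} with the same sequence $\{r_j\}$ used in the proof of Theorem \ref{thmeq} to establish the equivalence $F(x,s)\sim \tilde F(x,s)$, which immediately reduces convergence of $F(x,s)$ to convergence of $\tilde F(x,s)$ and identifies the limits. From the equivalent form $\tilde F(x,s)$, whose partial quotients alternate between the pair $(a,b)=(x,s)$ and $(a,b)=(1,x)$, the standard convergent recurrences for $A_j(x,s)$, $B_j(x,s)$ yield (after collapsing pairs of steps) the four-term recurrences \eqref{eq30}, exactly as in Theorem \ref{thm2}; this argument is a mechanical rerun with $x+1$ replaced by $(s+1)x+1$ and $2x+1$ replaced by $(s+1)x+1$.

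Next I would verify the generating functions and the simple linear relations between the $A_j$ and $B_j$ listed in the excerpt by the same generating-function manipulations as in the $s=1$ case, and then establish the quadratic identities \eqref{eq31} and \eqref{eq32} by induction on $j$, taking the initial cases directly from $A_0=x$, $B_0=1$ and $A_1=x(s+1)$, $B_1=s$. This induction is the heart of the matter: one substitutes the recurrences \eqref{eq30} into $A_{2j+2}^2-(2x-1)A_{2j+2}B_{2j+2}-xB_{2j+2}^2$, expands, and uses the auxiliary identities (the $s$-analogues of \eqref{eq23}, \eqref{eq24}, \eqref{eq14}, \eqref{eq15}) to collapse the cross terms. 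This is the only step that requires any real computation, and it is the step I expect to be the main obstacle, since one must check carefully that the same cancellations that produced $-x^{j+3}$ and $x^{j+1}$ in Lemma \ref{mainlem} still go through with the leading coefficient $s$ appearing on the right-hand side of \eqref{eq32} (and no $s$ on the right-hand side of \eqref{eq31}). The combinatorics of the recursion make this plausible: the even-indexed identity is $s$-free because $A_0^2-(2x-1)A_0B_0-xB_0^2=-x^2$, while the odd-indexed identity picks up a single factor of $s$ because $A_1^2-(2x-1)A_1B_1-xB_1^2=x s$ when $A_1=x(s+1)$ and $B_1=s$.

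Once \eqref{eq31} and \eqref{eq32} are in hand, the convergence argument is immediate. Dividing through by $B_{2k-1}^2$ or $B_{2k-2}^2$ gives
\begin{equation*}
P(x,F_k(x,s))=\frac{x^k s}{B_{2k-1}^2(x,s)},\qquad P(x,F_k^*(x,s))=\frac{-x^{k+1}}{B_{2k-2}^2(x,s)}.
\end{equation*}
From the recurrence \eqref{eq30} and the initial data one sees by induction that $\deg B_{2k-1}=\deg B_{2k-2}=k-1$ in $x$ with positive integer coefficients (and the factor $s$ is visible in $B_{2k-1}$). Hence for any fixed $x>1$ and $s\ge 1$ the denominators grow like $x^{2(k-1)}$ and both quantities tend to $0$ as $k\to\infty$, so every subsequential limit of $F_k(x,s)$ and of $F_k^*(x,s)$ is a root of $P(x,z,s)$.

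Finally, since every $F_k(x,s)$ and $F_k^*(x,s)$ is positive (all partial quotients of the original continued fraction being positive reals), and since $P(x,z,s)=sz^2-((s+1)x-1)z-x$ has exactly one positive root (its two roots have product $-x/s<0$), both the even and the odd convergents necessarily converge to that common positive root. This gives convergence of $F(x,s)$ itself to the positive root of $P(x,z,s)$, completing the proof.
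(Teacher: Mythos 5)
Your overall strategy is exactly the one the paper follows (and largely omits, remarking only that the proofs ``have the same construction'' as in the $s=1$ case): establish the equivalence $F(x,s)\sim\tilde F(x,s)$ via Theorem \ref{thm1}, derive the recurrences \eqref{eq30}, prove quadratic identities for the convergents by induction, and conclude by noting that the quadratic evaluated at $F_{k}(x,s)$ and $F_{k}^{*}(x,s)$ tends to zero while all convergents are positive. Your closing observation that $P(x,z,s)$ has exactly one positive root because the product of its roots is $-x/s<0$ is in fact a cleaner justification of the last step than the paper gives.

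There is, however, a concrete error in the step you yourself single out as the heart of the matter. The identities \eqref{eq31} and \eqref{eq32}, in the form you transcribe them, use the quadratic $A^2-(2x-1)AB-xB^2$, i.e.\ the $s=1$ form $P(x,z)$; for $s\neq1$ they are false, and your asserted base case already fails: with $A_1=x(s+1)$ and $B_1=s$ one computes
\begin{equation*}
A_1^{2}-(2x-1)A_1B_1-xB_1^{2}=x^{2}(1-s^{2})+xs\neq xs\quad(s\neq1).
\end{equation*}
The identities that actually hold, and that you implicitly rely on when you write $P(x,F_k(x,s))=x^{k}s/B_{2k-1}^{2}$, are the ones attached to the quadratic of the theorem itself:
\begin{equation*}
sA_{2j-1}^{2}-((s+1)x-1)A_{2j-1}B_{2j-1}-xB_{2j-1}^{2}=x^{j}s,
\qquad
sA_{2j}^{2}-((s+1)x-1)A_{2j}B_{2j}-xB_{2j}^{2}=-x^{j+2},
\end{equation*}
whose base cases do check out ($sA_1^{2}-((s+1)x-1)A_1B_1-xB_1^{2}=xs$ and $sA_0^{2}-((s+1)x-1)A_0B_0-xB_0^{2}=-x^{2}$). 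The induction must be run on these forms, with the cross-term cancellations redone for the coefficients $s$ and $(s+1)x-1$ rather than $1$ and $2x-1$; note that dividing by $B^{2}$ then directly produces $P(x,z,s)$ evaluated at the convergent, which is what the convergence argument needs. With that correction the rest of your argument goes through.
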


We conclude by drawing a relationship between  the continued fraction $F(1,x^{-1})$ and some $q-$series studied by Auluck, see \cite{Auluck} and a Ramanujan $q-$series. Define a continued fraction with $m$ interlacing geometric series by
\begin{equation}\label{eq34}
  F(m;x_{1},\dots,x_{m}):=[x_{1},\dots,x_{m},x_{1}^{2},\dots,x_{m}^{2},x_{1}^{3},\dots,x_{m}^{3},\dots]
\end{equation}
Aside from the present work, nothing seems to be known for $m\geq2$. We also don't see anything of immediate interest for $m\geq3$, or even the case $m=1$ (despite a superficial resemblance to the famous Rogers-Ramanujan continued fraction). However the case $m=2$, $x_{1}=1$ and $x_{2}=x$, namely;
\begin{equation}\label{eq35}
  F(2;1,x):=[1,x,1,x^{2},1,x^{3},1,x^{4},\dots]
\end{equation}
does seem of immediate interest. Write the $k^{th}$ truncation of $F(2;1,x)$ in reduced form as $\frac{N_{k}(x)}{D_{k}(x)}$. Here $(D_{k}(x))$ seems to converge, as a formal power series, to the $q-$series
\begin{equation*}\label{}
  \sum_{k=1}^{\infty}\frac{x^{\frac{k^{2}+k}{2}}}{((1-x)(1-x^2)\dots(1-x^{k-1}))^{2}(1-x^k)}
\end{equation*}
studied by Auluck in \cite{Auluck}. Coefficients of this power series are \textbf{$A001524$}\footnote[1]{As given in the Online Encyclopedia of Integer Sequences https://oeis.org/A001524}. On the other hand, reversing the coefficients of $D_{2k+1}(x)$, the sequence $(D_{k}(x))$ seems to converge as a formal power series to another $q-$series 
\textcolor{black}{ studied by Auluck whose coefficients are \textbf{$A005576$}\footnote[2]{As given in the Online Encyclopedia of Integer Sequences https://oeis.org/A005576}}.

The sequence of polynomials $(N_{2k+1}(x))$ seems to converge, as a formal power series, to a Ramanujan $q-$series
\begin{equation*}\label{}
  \sum_{k=1}^{\infty}\frac{x^{\frac{k^{2}+k}{2}}}{((1-x)(1-x^2)\dots(1-x^k))^{2}}
\end{equation*}
which has coefficients \textbf{$A143184$}\footnote[3]{As given in the Online Encyclopedia of Integer Sequences https://oeis.org/A143184}.

Note: The $A_{n}(x,s)$ and $B_{n}(x,s)$ polynomials that appear in the convergents of $F_{k}(x,s)$ have an interesting root distribution. This together with related results will be a subject of another paper.

\section{Acknowledgements}
I thank Kenneth Stolarsky for his continued support, insightful comments and guidance in the writing of this  paper. I  also  thank Bruce Berndt for always pointing me in the right direction and giving me useful ideas for solving some of the problems.

\end{document}